\newcommand{\mc}[1]{{\mathcal #1}}
\newcommand{\mb}[1]{{\mathbf #1}}
\newtheorem{theorem}{Theorem}
\newtheorem{proposition}{Proposition}
\newtheorem{remark}{Remark}
\begin{document}
\begin{frontmatter}

\title{Mean Curvature Flow of Closed Curves Evolving in Two Dimensional Manifolds}

\author[CVUT]{Miroslav Kol\'a\v{r}}
\ead{kolarmir@fjfi.cvut.cz}

\author[UK]{Daniel \v{S}ev\v{c}ovi\v{c}\corref{Corr}}
\cortext[Corr]{Corresponding author}
\ead{sevcovic@fmph.uniba.sk}

\address[CVUT]{Department of Mathematics, Faculty of Nuclear Sciences and Physical Engineering Czech Technical University in Prague, Trojanova 13, Prague, 12000, Czech Republic}

\address[UK]{Comenius University Bratislava,
Department of Applied Mathematics and Statistics, Faculty of Mathematics, Physics and Informatics, Comenius University in Bratislava, Mlynska dolina, 842 48 Bratislava, Slovakia}

\begin{abstract}

We investigate the motion of a family of closed curves evolving according to the geometric evolution law on a given two dimensional manifold which is embedded or immersed in the three-dimensional Euclidean space. We derive a system of nonlinear parabolic equations describing the motion of curves belonging to a given two-dimensional manifold. Using the abstract theory of analytic semiflows, we prove the local existence, uniqueness of H\"older smooth solutions to the governing system of nonlinear parabolic equations for the position vector parametrization of evolving curves. We apply the method of flowing finite volumes in combination with the methods of lines for numerical approximation of the governing equations. Qualitative analytical results are illustrated by various numerical experiments.

\end{abstract}

\begin{keyword}
Curvature-driven flow, binormal flow, analytic semi-flows, H\"older smooth solutions, flowing finite-volume method.
\end{keyword}

\end{frontmatter}

\pagestyle{myheadings}
\thispagestyle{plain}
\markboth{M. KOL{\'A}\v{R}, D. \v{S}EV\v{C}OVI\v{C}}{Mean Curvature Flow of Closed Curves}

\section{Introduction}

In this article, we investigate the motion of a family $\{\Gamma_t, t\ge 0\}$ of closed curves evolving in the three-dimensional Euclidean space  $\mathbb{R}^3$ according to 
a subclass of the following geometric evolution law: 
 \begin{equation}
\partial_t\mb{X} = v_N \mb{N} + v_B \mb{B}  +  v_T \mb{T},
\label{eq:general}
\end{equation}
where the unit tangent $\mb{T}$, normal $\mb{N}$ and binormal $\mb{B}$ vectors form moving Frenet frame. In this paper, we restrict our interest to the investigation of the dynamics of three-dimensional closed curves on embedded and immersed manifolds in $\mathbb{R}^3$. 
 
The three-dimensional motion of closed curves is motivated by various physical applications arising in materials sciences, fluid dynamics, or molecular biology. In fluid dynamics, the motion of a curve in space is often applied to the analysis of vortex structures first studied by Helmholtz \cite{Helmholtz1858}. For a detailed summary of recent advances in the analysis of three-dimensional motion of curves,  we refer the reader to Kol{\'a}{\v{r}}, Bene\v{s} and \v{S}ev\v{c}ovi\v{c} \cite{kolar2022} and references therein. 

Among many important physical and engineering applications of flows of space curves evolving on prescribed surfaces, we mention the dislocation dynamics. A geodesic description of space curves is a convenient mathematical framework to simulate a dislocation cross-slip phenomenon in the crystalline structure of solids (cf. Kol{\' a}{\v r} \emph{et al.} \cite{PBKK:21}). The dislocation curves evolve on a prescribed two-dimensional manifold in $\mathbb{R}^3$.  Moreover, the dynamics of dislocation climbs \cite{niu2019dislocation,niu2017dislocation} investigated by Niu \emph{et al.} motivates us to study the problem of diffusion and transport along a moving space curve (cf.  Bene{\v s} \emph{et al.} \cite{benes2024diffusion}).
In nanomaterials manufacturing, a procedure called electrospinning is frequently used, i.e., by jetting polymer solutions in high electric fields into ultrafine nanofibers. The extruded fluid forms a curve evolving on the so-called Taylor conic surface. The jet oriface (outlet) is the vertex of the Taylor cone (cf. Reneker \cite{Reneker}, Yarin \emph{et al.} \cite{Yarin}, He \emph{et al.} \cite{He} and particular references therein). 
Structures arising from the electrospinning procedure move in space according to (\ref{eq:general}) and under the effect of electric forces \cite{Xu} to form nanofibers.
Recently, Kol{\' a}{\v r} and {\v S}ev{\v c}ovi{\v c} \cite{Kolar_algoritmy} analyzed, from the perspective of local existence and uniqueness, a motion of systems of space curves in the normal and binormal directions, mutually coupled by a Biot–Savart-type interaction. In \cite{kolar2017area}, an area-preserving flow was subsequently investigated using a geodesic formulation of the curves. The structure of the Helfrich flow of curves with non-local constraints was recently investigated by Kenmochi, Miyatake, and Sakakibara in \cite{kemmochi2024structure}. 

In \cite{Deckelnick2025}, Deckelnick and Nürnberg proposed a novel framework for the evolution of parametric curves driven by the anisotropic curve shortening flow in $\mathbb{R}^3$. Furthermore, in \cite{Binz}, Binz investigates optimal error estimates for semidiscrete and fully discrete schemes that approximate isotropic curve shortening flow in three dimensions. Their formulation relies on a careful choice of the tangential component of the velocity in parameterization, which transforms the evolution problem into a strictly parabolic differential equation. This equation is written in divergence form, enabling the construction of a natural variational discretization. For a fully discrete finite element scheme based on piecewise linear elements, optimal error bounds are established. Numerical experiments support the theoretical findings and illustrate the effectiveness of the method.

The paper is organized as follows. In the second section, a parametric description of evolving curves is introduced. In the third section, we restrict our interest to the motion on a closed surface without boundary. We derive a force term that attaches the curve to the surface and formulate the system of governing equations for such a restricted 3D motion. In the fourth section, we discuss the conditions for the existence and uniqueness of classical H\"older solutions. We also recall the role of the tangential velocity when using the parametric description. In the fifth section, we propose the numerical approximation scheme based on the flowing finite-volume approach. Finally, several computational examples are presented in the sixth section.

\section{Lagrangian description of evolving curves}

Our methodology for solving (\ref{eq:general}) is based on the so-called direct Lagrangian approach investigated by Dziuk \cite{dziuk1994convergence}, Deckelnick \cite{deckelnick1997weak}, Gage and Hamilton \cite{gage1986heat}, Mikula and \v{S}ev\v{c}ovi\v{c} \cite{sevcovic2001evolution, MS2004, MMAS2004, MS3}, and references therein. We explore the direct Lagrangian approach for an analytical and numerical solution of the geometric motion law (\ref{eq:general}). The evolving family of curves $\Gamma_t$ is parametrized by smooth mapping $\mb{X}:I\times[0,\infty) \to \mathbb{R}^3$, such that $\Gamma_t = \{ \mb{X}(u,t), u\in I\}, t\ge 0$. In what follows, we denote by $I=\mathbb{R}/\mathbb{Z}\simeq S^1$ the periodic interval $I=[0,1]$ isomorphic to the unit circle $S^1$. We assume that the scalar velocities $v_N, v_T, v_B$ are smooth functions of the position vector $\mb{X}\in\mathbb{R}^3$, the curvature $\kappa$, the torsion $\tau$,  and possibly depend nonlocally on the quantities associated with the curve $\Gamma_t$ itself, for example, its length. 
That is,
\[
v_K= v_K(\mb{X}, \kappa, \tau, \mb{T}, \mb{N}, \mb{B}, \Gamma_t), \quad K\in\{T, N, B\}.
\]
The unit tangent vector $\mb{T}$ to $\Gamma_t$ is defined as $\mb{T} = \partial_s \mb{X}$, where $s$ is the unit arc-length parameterization defined by $ds = |\partial_u \mb{X}| du$. Here, $|\mb{x}|$ and 
$\mb{x}^\intercal\mb{y}\equiv\mb{x}\cdot \mb{y}$ denote the Euclidean norm and the inner product of the vectors 
$\mb{x}, \mb{y}\in \mathbb{R}^3$. The curvature $\kappa$ of a curve $\Gamma_t$ is defined as $\kappa = |\partial_{s}\mb{X}\times\partial^2_{s} \mb{X} | = | \partial_{s}^2 \mb{X} |$. If $\kappa > 0$, we can define the Frenet frame along the curve $\Gamma_t$ with unit normal $\mb{N} = {\kappa}^{-1} \partial_{s}^2 \mb{X}$ and binormal vectors $\mb{B} = \mb{T} \times \mb{N}$, respectively. Recall the Frenet-Serret formulae: 
\[
\frac{\text{d}}{\text{d} s} 
\begin{pmatrix}
\mb{T} \\ \mb{N} \\ \mb{B}
\end{pmatrix}
=
\begin{pmatrix}
0 & \kappa & 0 \\
-\kappa & 0 & \tau \\
0 & -\tau & 0
\end{pmatrix}
\begin{pmatrix}
\mb{T} \\ \mb{N} \\ \mb{B}
\end{pmatrix},
\]
where $\tau$ is the torsion of $\Gamma_t$. 

We study a coupled system of evolutionary equations that describes the evolution of closed 3D curves evolving in normal and binormal directions. More specifically, we focus on the motion of a family of curves evolving in 3D and satisfying the geometric law
\begin{equation}
\partial_t\mb{X} = a \partial^2_{s} \mb{X} + \mb{F}(\mb{X}, \partial_{s} \mb{X}) +\alpha\partial_s\mb{X},
\label{eq:ab}
\end{equation}
where $a=a(\mb{X}, \partial_s\mb{X}) > 0$, and $\mb{F}=\mb{F}(\mb{X}, \partial_s\mb{X})$ are bounded and smooth functions of their arguments. Here, $\mb{F}$ is an external force term that restricts the movement of $\Gamma_t$ to a given manifold. The forcing term is discussed further in Theorem \ref{theo-F}. Since $\partial^2_s \mb{X} =\kappa \mb{N}$ and $\mb{B} = \mb{T} \times \mb{N}$ the relationship between the geometric equations (\ref{eq:general}) and (\ref{eq:ab}) is as follows:
\begin{equation}
v_N = a \, \kappa + \mb{F}^\intercal \mb{N}, \quad
v_B = \mb{F}^\intercal  \mb{B}, \quad
v_T = \mb{F}^\intercal  \mb{T} + \alpha.
\label{eq:rel}
\end{equation}
The system of equations (\ref{eq:ab}) is subject to the initial condition
\begin{equation}
\mb{X}(u,0)  = \mb{X_0}(u), u\in I=\mathbb{R}/\mathbb{Z}\simeq S^1, 
\label{init-ab}
\end{equation}
representing parameterization of the initial curve $\Gamma_0$.

\section{Evolution of closed curves on a closed surface without boundary}

In this section, we analyze the evolution of closed curves on a two-dimensional surface without boundary. First, we discuss the evolution of curves in embedded manifolds. 

\subsection{Evolution of curves on embedded manifolds}
Assume ${\mathcal{M}}\in\mathbb{R}^3$ is an embedded manifold given by 
${\mathcal{M}}=\{ \mb{X}=(X_1,X_2,X_3)^\intercal\in\mathbb{R}^3,\ f(\mb{X}) =0\}$ where  $f(\mb{X}):\mathbb{R}^3\to \mathbb{R}$ is a $C^4$ smooth regular map. This means that $\nabla f(\mb{X})\not=0$  for $\mb{X}\in {\mathcal{M}}$.

\begin{figure}
\begin{center}
\includegraphics[width=0.35\textwidth]{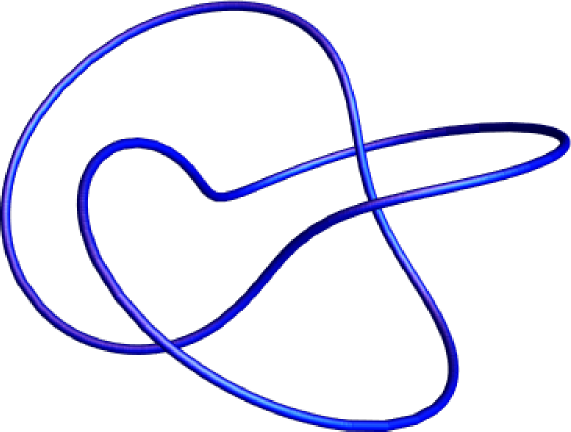}
\quad
\includegraphics[width=0.35\textwidth]{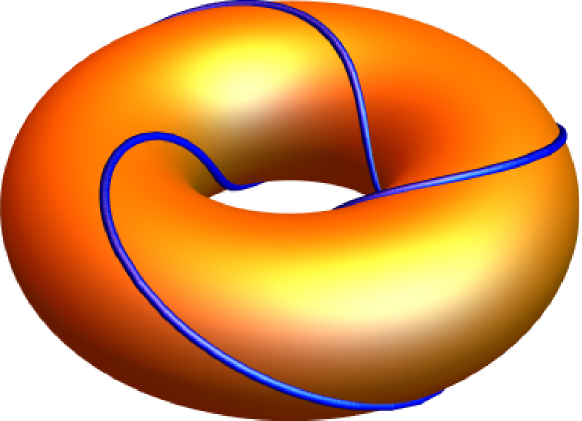}

{\small a) \hskip 5truecm b)}

\caption{A knotted curve a) belonging to the embedded torus surface b). } 

\label{fig-curve-on-torus}
\end{center}
\end{figure}

Let $\phi(u,t) = f(\mb{X}(u,t))$. Then
\begin{eqnarray*}
\partial_t\phi &=& \nabla f(\mb{X})^\intercal\partial_t\mb{X} \\
&=& \nabla f(\mb{X})^\intercal \left( a \partial^2_s\mb{X} + \mb{F} +\alpha \partial_s \mb{X}\right)
= a \nabla f(\mb{X})^\intercal \partial^2_s\mb{X} + \nabla f(\mb{X})^\intercal \mb{F} +\alpha \nabla f(\mb{X})^\intercal \partial_s \mb{X}
\\
&=& a \partial_s \left(\nabla f(\mb{X})^\intercal \partial_s\mb{X}\right) - a \partial_s\mb{X}^\intercal \nabla^2 f(\mb{X}) \partial_s\mb{X} + \nabla f(\mb{X})^\intercal \mb{F} +\alpha \partial_s \phi
\\
&=& a \partial^2_s \phi - a \partial_s\mb{X}^\intercal \nabla^2 f(\mb{X}) \partial_s\mb{X} + \nabla f(\mb{X})^\intercal \mb{F} 
+ \alpha \partial_s \phi
\end{eqnarray*}
because $\partial_s\phi=\nabla f(\mb{X})^\intercal \partial_s\mb{X}$. 

\begin{theorem}\label{theo-F}
Suppose that the external force $\mb{F}$ is given by
\begin{equation}
\mb{F}(\mb{X},\mb{T})= a \frac{\mb{T}^\intercal \nabla^2 f(\mb{X}) \mb{T} + h(f(\mb{X}))}{\vert \nabla f(\mb{X})\vert^2} \nabla f(\mb{X}), 
\label{F}
\end{equation}
$\mb{T}=\partial_s\mb{X}$. Here $h:\mathbb{R}\to\mathbb{R}$ is a smooth function, $h(0)=0$. 
If the initial curve $\Gamma_0 \subset {\mathcal{M}}$ then   the family of curves $\Gamma_t$, for $t>0$, evolving with respect to the geometric equation (\ref{eq:ab})   belongs to the manifold ${\mathcal{M}}=\{ \mb{X}\in\mathbb{R}^3,\ f(\mb{X}) =0\}$. Furthermore, the tangential velocity $v_T=\alpha$.

\end{theorem}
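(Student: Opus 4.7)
The plan is to substitute the prescribed force $\mb{F}$ into the preparatory computation of $\partial_t\phi$ already displayed in the excerpt, where $\phi(u,t)=f(\mb{X}(u,t))$. Since $\nabla f^\intercal \mb{F} = a\bigl(\mb{T}^\intercal\nabla^2 f(\mb{X})\mb{T}+h(f(\mb{X}))\bigr)$ by the choice \eqref{F} (the factor $|\nabla f|^{2}$ cancels), the indefinite term $-a\,\partial_s\mb{X}^\intercal\nabla^2 f(\mb{X})\partial_s\mb{X}$ in the identity for $\partial_t\phi$ cancels with the $\mb{T}^\intercal\nabla^2 f(\mb{X})\mb{T}$ contribution from $\nabla f^\intercal\mb{F}$. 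What remains is the scalar parabolic equation
\begin{equation*}
\partial_t\phi \;=\; a\,\partial_s^2\phi + \alpha\,\partial_s\phi + a\,h(\phi), \qquad u\in I,\ t>0.
\end{equation*}

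Second, I would invoke the hypothesis $\Gamma_0\subset\mathscr{M}$, which gives the initial datum $\phi(u,0)=f(\mb{X}_0(u))\equiv 0$, together with the assumption $h(0)=0$. Then $\phi\equiv 0$ is a classical solution of this one-dimensional quasilinear parabolic Cauchy problem on $I\simeq S^1$. Uniqueness of sufficiently smooth solutions (standard for a uniformly parabolic scalar equation with $a>0$ and a locally Lipschitz nonlinearity $h$) forces $\phi(u,t)\equiv 0$, i.e.\ $f(\mb{X}(u,t))=0$ for all $u\in I$ and $t\ge 0$ on the time interval of existence. Hence $\Gamma_t\subset\mathscr{M}$ for every $t>0$, which is the first assertion.

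For the tangential-velocity claim I would use the decomposition \eqref{eq:rel}, namely $v_T=\mb{F}^\intercal\mb{T}+\alpha$. Because $\mb{F}$ is by construction parallel to $\nabla f(\mb{X})$, we have $\mb{F}^\intercal\mb{T}$ proportional to $\nabla f(\mb{X})^\intercal\partial_s\mb{X} = \partial_s\phi$. Since $\phi\equiv 0$ along the evolution established in the previous step, $\partial_s\phi\equiv 0$, so $\mb{F}^\intercal\mb{T}=0$ and therefore $v_T=\alpha$, as claimed. Geometrically this is just the statement that $\nabla f(\mb{X})$ is normal to $\mathscr{M}$ whereas $\mb{T}$ is tangent to the curve $\Gamma_t\subset\mathscr{M}$, hence tangent to $\mathscr{M}$.

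The only genuinely delicate point is the uniqueness step: we need $\phi\equiv 0$ to be the unique solution of the parabolic Cauchy problem with vanishing initial data on the torus $I$. Assuming the regularity framework of the paper (H\"older smooth solutions obtained from the analytic semiflow theory referenced in the introduction), this is routine; the rest of the argument is algebraic cancellation driven by the specific form \eqref{F}, which was engineered precisely so that $\phi$ satisfies a closed scalar equation preserving the zero level set.
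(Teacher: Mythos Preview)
Your proof is correct and follows essentially the same route as the paper: substitute \eqref{F} into the preparatory identity for $\partial_t\phi$, observe that the Hessian terms cancel so that $\phi$ satisfies a closed scalar parabolic equation with $\phi(\cdot,0)\equiv 0$, conclude $\phi\equiv 0$ by uniqueness (since $h(0)=0$), and then use $\nabla f(\mb{X})^\intercal\mb{T}=\partial_s\phi=0$ to get $\mb{F}^\intercal\mb{T}=0$ and hence $v_T=\alpha$. Your version is in fact slightly more careful, both in tracking the factor $a$ in front of $h(\phi)$ and in flagging the uniqueness step explicitly.
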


\begin{proof}

If the initial curve $\Gamma_0 \subset {\mathcal{M}}$ then $\phi(\cdot,0) = f(\mb{X}(\cdot,0))=0$. The function $\phi$ is a solution to the parabolic equation:
\begin{equation}
\partial_t\phi  = a \partial^2_s \phi + a h(\phi) + \alpha \partial_s \phi,
\label{parabolic}
\end{equation}
with a zero initial condition $\phi(\cdot,0) =0$. Since $h(0)=0$ we have $\phi(\cdot,t) =0$ for all $t>0$. That is $\Gamma_t \subset {\mathcal{M}}$. 
The projection   $\mb{F}^\intercal\mb{T}$ of $\mb{F}$ to the tangent direction is vanishing. In fact, as $0=\partial_s\phi = \nabla f(\mb{X})^\intercal \mb{T}$, we have 
$\mb{F}(\mb{X},\mb{T})^\intercal\mb{T} = a \frac{\mb{T}^\intercal \nabla^2 f(\mb{X}) \mb{T} + h(f(\mb{X}))}{\vert \nabla f(\mb{X})\vert^2} \nabla f(\mb{X})^\intercal \mb{T} =0$.

\end{proof}

\begin{theorem}\label{theo-lengthshortening}
Suppose that the external force $\mb{F}$ is defined by equation (\ref{F}), and the initial curve $\Gamma_0 \subset {\mathcal{M}}$. Then the geometric flow of the curves, which satisfies the law (\ref{eq:ab}),   is   the length-shortening flow  on the surface $\mathcal{M}$, i.e., $\frac{d}{dt} L(\Gamma_t) \le 0$, where $L(\Gamma_t)$ denotes the total length of the curve $\Gamma_t$.
\end{theorem}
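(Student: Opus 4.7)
The plan is to start from the classical first-variation formula for the length functional applied to a $3$-D curve and then exploit the tangency constraint $f(\mathbf{X}) = 0$ twice-differentiated to rewrite the Hessian term appearing in $\mathbf{F}$ in terms of the curvature.

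First, I would compute $\partial_t g$ with $g = |\partial_u \mathbf{X}|$. Writing $\partial_t \mathbf{X} = v_T \mathbf{T} + v_N \mathbf{N} + v_B \mathbf{B}$ and using $\partial_u = g\,\partial_s$ together with the Frenet--Serret formulae, one gets the standard identity
\[
\partial_t g = g\bigl(\partial_s v_T - \kappa\, v_N\bigr),
\]
so that, by periodicity on $I = \mathbb{R}/\mathbb{Z}$,
\[
\frac{d}{dt} L(\Gamma_t) = \int_I \partial_t g\, du = -\int_{\Gamma_t} \kappa\, v_N \, ds.
\]
Notice that only the normal velocity contributes; tangential and binormal motions drop out (the latter because its $\mathbf{T}$-component is proportional to $\tau\,\mathbf{B}\cdot\mathbf{T}=0$).

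Next, I would insert $v_N = a\kappa + \mathbf{F}^{\intercal}\mathbf{N}$ from (\ref{eq:rel}) and evaluate $\mathbf{F}^{\intercal}\mathbf{N}$ on the manifold. Since $\Gamma_t \subset \mathscr{M}$ by Theorem~\ref{theo-F}, we have $f(\mathbf{X}) \equiv 0$, so $h(f(\mathbf{X})) = h(0) = 0$ and
\[
\mathbf{F}(\mathbf{X},\mathbf{T}) = a\,\frac{\mathbf{T}^{\intercal}\nabla^2 f(\mathbf{X}) \mathbf{T}}{|\nabla f(\mathbf{X})|^2}\,\nabla f(\mathbf{X}).
\]
The crucial observation is obtained by differentiating the identity $f(\mathbf{X}(s,t)) = 0$ twice in $s$: the first derivative gives $\nabla f^{\intercal}\mathbf{T}=0$, and the second yields
\[
\mathbf{T}^{\intercal}\nabla^2 f(\mathbf{X})\,\mathbf{T} + \nabla f(\mathbf{X})^{\intercal}\partial_s \mathbf{T} = 0,
\]
i.e. $\mathbf{T}^{\intercal}\nabla^2 f(\mathbf{X})\mathbf{T} = -\kappa\,\nabla f(\mathbf{X})^{\intercal}\mathbf{N}$.

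Substituting this back,
\[
\mathbf{F}^{\intercal}\mathbf{N} = -a\,\kappa\,\frac{(\nabla f(\mathbf{X})^{\intercal}\mathbf{N})^2}{|\nabla f(\mathbf{X})|^2},
\qquad
\kappa\, v_N = a\kappa^2\left(1 - \frac{(\nabla f(\mathbf{X})^{\intercal}\mathbf{N})^2}{|\nabla f(\mathbf{X})|^2}\right) \geq 0,
\]
since the bracket is $1-\cos^2\theta = \sin^2\theta$, where $\theta$ is the angle between the curve's principal normal $\mathbf{N}$ and the surface normal $\nabla f/|\nabla f|$. Geometrically, this is precisely $\kappa^2\sin^2\theta = \kappa_g^2$, the squared geodesic curvature of $\Gamma_t$ in $\mathscr{M}$. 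Integrating yields $\frac{d}{dt}L(\Gamma_t) = -\int_{\Gamma_t} a\,\kappa_g^2\, ds \leq 0$, as claimed.

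The only non-routine step is recognising that the ambient Hessian term in $\mathbf{F}$ can be replaced by a curvature expression via the twice-differentiated constraint; once that identity is in hand, the cancellation producing $\sin^2\theta$ (equivalently, projection onto the tangent plane of $\mathscr{M}$) is automatic. I expect no essential difficulty beyond correctly tracking the Frenet identities in the computation of $\partial_t g$; the tangential term $\alpha\,\partial_s \mathbf{X}$ in (\ref{eq:ab}) integrates out and plays no role.
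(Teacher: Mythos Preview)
Your proposal is correct and follows essentially the same argument as the paper: invoke Theorem~\ref{theo-F} to keep $\Gamma_t\subset\mathscr{M}$, differentiate the constraint $f(\mathbf{X})=0$ twice in $s$ to convert the Hessian term in $\mathbf{F}$ into $-\kappa\,\nabla f^{\intercal}\mathbf{N}$, and plug into the first-variation formula $\frac{d}{dt}L=-\int\kappa v_N\,ds$. The only cosmetic difference is that the paper writes the nonnegative factor as $(\nabla f^{\intercal}\mathbf{B})^2/|\nabla f|^2$ via the orthonormal decomposition $|\nabla f|^2=(\nabla f^{\intercal}\mathbf{N})^2+(\nabla f^{\intercal}\mathbf{B})^2$ (since $\nabla f^{\intercal}\mathbf{T}=0$), whereas you write it as $\sin^2\theta=\kappa_g^2/\kappa^2$; the paper makes exactly this geodesic-curvature identification in the remark immediately following the proof.
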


\begin{proof}

Suppose that the initial curve $\Gamma_0 \subset {\mathcal{M}}$. With regard to Theorem~\ref{theo-F} we have $\Gamma_t \subset {\mathcal{M}}$ for any $t\ge 0$. That is, $\phi(s,t)\equiv f(\mb{X}(s,t)) =0$ for all $s\in[0,L(\Gamma_t)]$ and $t\ge 0$.  Hence $0=\partial_s\phi = \nabla f(\mb{X})^\intercal \mb{T}$, and 
\[
0=\partial^2_s\phi = \mb{T}^\intercal \nabla^2 f(\mb{X}) \mb{T} + \nabla f(\mb{X})^\intercal \partial_s \mb{T} = \mb{T}^\intercal \nabla^2 f(\mb{X}) \mb{T} + \kappa \nabla f(\mb{X})^\intercal \mb{N}.
\]
The projection of the velocity $\partial_t\mb{X}$ in the normal direction $\mb{N}$ is given by 
\begin{eqnarray}
v_N &=& (a\,\partial^2_s \mb{X} + \mb{F})^\intercal \mb{N} 
= a\,\kappa + a\,\frac{\mb{T}^\intercal \nabla^2 f(\mb{X}) \mb{T}}{\vert \nabla f(\mb{X})\vert^2} \nabla f(\mb{X})^\intercal \mb{N}
= a\, \kappa  - a\,\kappa \frac{(\nabla f(\mb{X})^\intercal \mb{N})^2}{\vert \nabla f(\mb{X})\vert^2}
\nonumber
\\
&=& a\, \kappa  - a\,\kappa \frac{(\nabla f(\mb{X})^\intercal \mb{N})^2}{\vert \nabla f(\mb{X})\vert^2}
= a\, \kappa \frac{(\nabla f(\mb{X})^\intercal \mb{B})^2}{\vert \nabla f(\mb{X})\vert^2}
\label{normalvelocity}
\end{eqnarray}
because 
$\vert \nabla f(\mb{X})\vert^2 = (\nabla f(\mb{X})^\intercal \mb{N})^2 + (\nabla f(\mb{X})^\intercal \mb{T})^2 + (\nabla f(\mb{X})^\intercal \mb{B})^2 = (\nabla f(\mb{X})^\intercal \mb{N})^2 + (\nabla f(\mb{X})^\intercal \mb{B})^2$. 
\begin{equation}
\frac{d}{dt} L(\Gamma_t) = - \int_{\Gamma_t} \kappa v_N ds 
= - \int_{\Gamma_t} a\, \kappa^2 \frac{(\nabla f(\mb{X})^\intercal \mb{B})^2}{\vert \nabla f(\mb{X})\vert^2} ds \le 0.
\label{dLequation}
\end{equation}

\end{proof}

Any monotonically increasing  modification of the normal velocity $v_N$ given by (\ref{normalvelocity}) again represents a length-shortening flow.
 
\begin{remark}
The geodesic curvature $\kappa^g$ of a curve $\Gamma\subset \mathcal{M}$ can be defined as a projection of the derivative $\partial_s\mb{T}$ to the unit vector $\mb{N}^g$ perpendicular to $\mb{T}$ and belonging to the tangent space $T_X(\mathcal{M})$ at the point $\mb{X}\in\mathcal{M}$. Both vectors belong to the tangent space $T_X(\mathcal{M})$ that is perpendicular to the outer normal vector $\nabla f(\mb{X})$ to the surface $\mathcal{M}$. The unit normal vector to the surface $\mathcal{M}$ is given by the vector $\nabla f(\mb{X})/|\nabla f(\mb{X})|$. Therefore, $\mb{N}^g = (\nabla f(\mb{X})\times \mb{T})/|\nabla f(\mb{X})|$. Hence, 
\begin{eqnarray*}
\kappa^g &=& \partial_s\mb{T}^\intercal \mb{N}^g = \kappa \mb{N}^\intercal \mb{N}^g
= \kappa \mb{N}^\intercal (\nabla f(\mb{X})\times \mb{T})/|\nabla f(\mb{X})|
\\
&=& \kappa  \nabla f(\mb{X})^\intercal (\mb{T}\times \mb{N})/|\nabla f(\mb{X})|
= \kappa  (\nabla f(\mb{X})^\intercal \mb{B})/|\nabla f(\mb{X})| .
\end{eqnarray*}
 
This means that the flow driven by the geodesic curvature with normal velocity $v_{N^g} =  a  \kappa^g$ is the same as the flow of a surface with $v_N$ given by  (\ref{normalvelocity}).
 
According to Mikula and \v{S}ev\v{c}ovi\v{c} \cite[Eq. (12)]{MS3} we have $\frac{d}{dt} L(\Gamma_t) = - \int_{\Gamma_t} \kappa^g v_{N^g} ds$. If the normal velocity $v_{N^g}$ is proportional to the geodesic curvature,  $v_{N^g} =  a  \kappa^g$ then we obtain
\begin{equation}
\frac{d}{dt} L(\Gamma_t) = - \int_{\Gamma_t} \kappa^g v_{N^g} ds 
=  - \int_{\Gamma_t} a (\kappa^g)^2 ds 
= - \int_{\Gamma_t} a\, \kappa^2 \frac{(\nabla f(\mb{X})^\intercal \mb{B})^2}{\vert \nabla f(\mb{X})\vert^2} ds \le 0, 
\label{dLequationgeodesic}
\end{equation}
which is the relation (\ref{dLequation}). 
\end{remark}

\subsection{Evolution of curves on immersed manifolds}

\begin{figure}

\begin{center}
\includegraphics[width=0.3\textwidth]{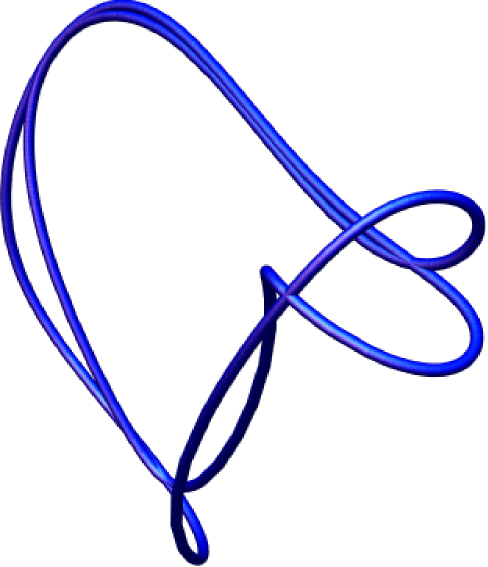}
\quad
\includegraphics[width=0.3\textwidth]{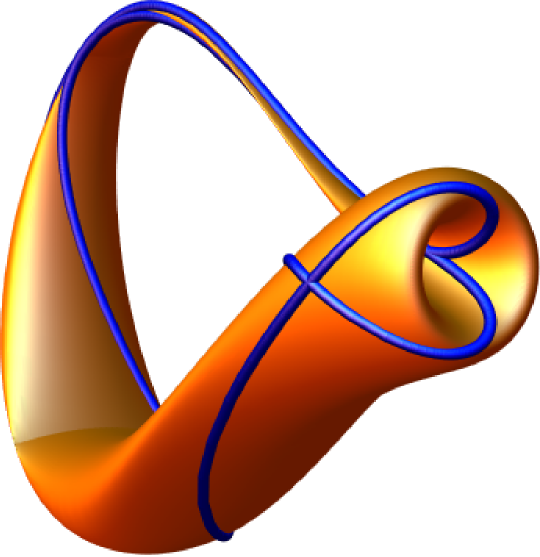}

{\small a) \hskip 5truecm b)}

\caption{a) An initial knotted curve belonging to the immersed  Klein bottle surface b)}

\label{fig-curve-on-klein-bottle}
\end{center}
\end{figure}

In this section, we discuss the evolution of 3D curves on immersed manifolds. We consider an immersed manifold $\mathcal{M}=\{ \mb{X} = \mathcal{X}(\mb{Y}), \quad \mb{Y}\in I \times I\}$ 
where $\mb{Y} = (Y_1, Y_2)^\intercal$ and $\mb{X} = (X_1, X_2, X_3)^\intercal$ are parameterized by immersion $\mathcal{X}:  I\times I  \to \mathbb{R}^3$, where $I=\mathbb{R}/\mathbb{Z}\simeq S^1$ is the periodic interval.

\begin{proposition}\label{3D2D}
Suppose that the  function $\mb{Y}(\cdot, t) \subset I\times I$ is a 1-periodic solution to the parabolic equation: 
\begin{equation}
\partial_t \mb{Y} =a \partial_s^2 \mb{Y} + \mb{G}(\mb{Y}, \partial_s\mb{Y}) +\alpha \partial_s\mb{Y},\quad \mb{Y}(\cdot,0) =\mb{Y}_0(\cdot),
\label{Yimmersion}
\end{equation}
where $ds =  |\nabla \mathcal{X}(\mb{Y})^\intercal 
\partial_u \mb Y| du$, $a=a(\mathcal{X}(\mb{Y}), \nabla \mathcal{X}(\mb{Y})^\intercal \partial_{s} \mb{Y})$, and
  $\mb{G}: \mathbb{R}^2\times  \mathbb{R}^2\to \mathbb{R}^2$ is a $C^4$ smooth function. 
 
Then the closed curve $\Gamma_t=\{ \mb{X}(s,t), s\in[0,L(\Gamma_t)]\}\subset\mathcal{M}$, $\mb{X}(\cdot, t) = \mathcal{X}(\mb{Y}(\cdot,t))$ evolves according to the second order parabolic geometric equation:
\begin{equation}
\partial_t \mb{X} = a \partial_s^2 \mb{X}  + \mb{F}(\mb{X}, \partial_s\mb{X}) + \alpha\partial_s\mb{X}, \quad  \mb{X}(\cdot,0) =\mb{X}_0(\cdot),
\label{Ximmersion}
\end{equation}
where $\mb{F}(\mb{X}, \partial_s\mb{X}) = \nabla \mathcal{X}(\mb{Y})^\intercal \mb{G}(\mb{Y}, \partial_s\mb{Y}) - a  \partial_{s} \mb{Y}^\intercal \nabla^2 \mathcal{X}(\mb{Y}) \partial_{s} \mb{Y} $. Here $\mb{Y}\in I\times I$ is such that $\mb{X}= \mathcal{X}(\mb{Y})\in\mathcal{M}$ and $\partial_s\mb{Y} = (\nabla \mathcal{X}(\mb{Y}) \nabla \mathcal{X}(\mb{Y})^\intercal)^{-1} \nabla \mathcal{X}(\mb{Y}) \partial_s\mb{X}$ where the $2\times 3$ matrix 
\\
$(\nabla \mathcal{X}(\mb{Y}) \nabla \mathcal{X}(\mb{Y})^\intercal)^{-1} \nabla \mathcal{X}(\mb{Y})$ is the left Moore-Penrose pseudoinversion of $3\times 2$ matrix  $\nabla \mathcal{X}(\mb{Y})^\intercal$.

The curve $\Gamma_t, t\ge0$, evolves according to the geometric equation (\ref{eq:general}) with the normal, binormal and tangential velocities given by (\ref{eq:rel}).

\end{proposition}

\begin{proof}

Sin`ce $\mb{X} = \mathcal{X}(\mb{Y})$ we have
$\partial_{t} \mb{X} = \nabla \mathcal{X}(\mb{Y})^\intercal \partial_{t} \mb{Y}$, 
\quad $\partial_{s} \mb{X} = \nabla \mathcal{X}(\mb{Y})^\intercal \partial_{s} \mb{Y}$,  
\quad $\partial_{s}^2 \mb{X} = \nabla \mathcal{X}(\mb{Y})^\intercal \partial_{s}^2 \mb{Y} + \partial_{s} \mb{Y}^\intercal \nabla^2 \mathcal{X}(\mb{Y}) \partial_{s} \mb{Y}$, 
where $\nabla \mathcal{X}=\nabla \mathcal{X}(\mb{Y})$ is the $2\times 3$ matrix:
\[
\nabla \mathcal{X} =
\setlength\arraycolsep{2pt}
    \begin{pmatrix} 
    \frac{\partial \mathcal{X}_1}{\partial Y_1} & \frac{\partial \mathcal{X}_2}{\partial Y_1} &\frac{\partial \mathcal{X}_3}{\partial Y_1} 
    \\[5pt]
    \frac{\partial \mathcal{X}_1}{\partial Y_2} & \frac{\partial \mathcal{X}_2}{\partial Y_2} &\frac{\partial \mathcal{X}_3}{\partial Y_2} 
    \end{pmatrix}.
\]
The vector $\partial_{s} \mb{Y}^\intercal \nabla^2 \mathcal{X}(\mb{Y}) \partial_{s} \mb{Y}\in \mathbb{R}^3$ is constructed as follows:
\[
\partial_{s} \mb{Y}^\intercal \nabla^2 \mathcal{X}(\mb{Y}) \partial_{s} \mb{Y}
=
(\partial_{s} \mb{Y}^\intercal \nabla^2 \mathcal{X}_k(\mb{Y}) \partial_{s} \mb{Y})_{k=1,2,3} \in \mathbb{R}^3,
\quad\nabla^2 \mathcal{X}_k = 
\left( \frac{\partial^2 \mathcal{X}_k}
{\partial Y_i\partial Y_j}\right)_{i,j=1,2} .
\]
The $2\times 3$ matrix $\nabla \mathcal{X}(\mb{Y})$ has full rank  because the mapping $\mathcal{X}$ is assumed to be an immersion. Furthermore, as $\mb{X} = \mathcal{X}(\mb{Y})$ we have $ds = |\partial_u \mb{X}| du = | \nabla \mathcal{X}(\mb{Y})^\intercal \partial_{u} \mb{Y}| du$. Therefore, the derivative of $\mb{Y}$   with respect to the arc-length parameterization $s$ of the curve $\mb{X}$ can be written as 
\[
\frac{\partial \mb{Y}}{\partial s} = \frac{1}{|\nabla \mathcal{X}(\mb{Y})^\intercal \partial_u \mb Y|}  \frac{\partial \mb{Y}}{\partial u}.
\]
Then
\begin{eqnarray*}
\partial_{t} \mb{X} &=& \nabla \mathcal{X}(\mb{Y})^\intercal \partial_{t} \mb{Y}
= \nabla \mathcal{X}(\mb{Y})^\intercal
\left( a \partial_s^2 \mb{Y} + \mb{G}(\mb{Y}, \partial_s\mb{Y}) +\alpha \partial_s\mb{Y}
\right)
\\
&=&  a \partial_s^2 \mb{X}  +\alpha\partial_s\mb{X}
- a \partial_{s} \mb{Y}^\intercal \nabla^2 \mathcal{X}(\mb{Y}) \partial_{s} \mb{Y} 
+ \nabla \mathcal{X}(\mb{Y})^\intercal \mb{G}(\mb{Y}, \partial_s\mb{Y})
\\
&=& a \partial_s^2 \mb{X}  +\alpha\partial_s\mb{X} + \mb{F}(\mb{X}, \partial_s\mb{X}),
\end{eqnarray*}
as claimed.
\end{proof}

As an example of a non-orientable  immersed manifold in $\mathbb{R}^3$, we can consider the Klein bottle   surface   parameterized by $\mathcal{X}= (\mathcal{X}_1,  \mathcal{X}_2,  \mathcal{X}_3)$
 
where
\begin{eqnarray*}
\mathcal{X}_1(u,v) &=& -(2/15) \cos(2\pi u) (3 \cos(2\pi v) - 30 \sin(2\pi u) + 90 \cos(2\pi u)^4 \sin(2\pi u)
\\
&&-  60 \cos(2\pi u)^6 \sin(2\pi u) + 5 \cos(2\pi u) \cos(2\pi v) \sin(2\pi u)),
\\
\mathcal{X}_2(u,v) &=& -(1/15) \sin(2\pi u) (3 \cos(2\pi v) - 3 \cos(2\pi u)^2 \cos(2\pi v) - 48 \cos(2\pi u)^4 \cos(2\pi v)  
\\
&& + 48 \cos(2\pi u)^6 \cos(2\pi v) + 60 \sin(2\pi u) + 5 \cos(2\pi u) \cos(2\pi v) \sin(2\pi u) 
\\
&& -  5 \cos(2\pi u)^3 \cos(2\pi v) \sin(2\pi u) - 80  \cos(2\pi u)^5 \cos(2\pi v) \sin(2\pi u) 
\\
&& + 80  \cos(2\pi u)^7 \cos(2\pi v) \sin(2\pi u)  ),
\\
\mathcal{X}_3(u,v) &=& (2/15) (3 + 5 \cos(2\pi u) \sin(2\pi u)) \sin(2\pi v) .
\end{eqnarray*}
The surface of the Klein bottle is shown in Fig.~\ref{fig-curve-on-klein-bottle}, b). The initial curve $\mb{X}_0$ is parameterized by 
\[
\mb{X}_0(u) = \mathcal{X}(k u, l u), \quad u\in I,
\]
where $k=1, l=4$. It is shown in Fig.~\ref{fig-curve-on-klein-bottle}, a). The Klein bottle is an immersed manifold in $\mathbb{R}^3$. It is well known that it can be embedded in $\mathbb{R}^4$ but it cannot be embedded in $\mathbb{R}^3$.

\begin{remark}

The $2\times 2$ matrix $\nabla \mathcal{X}(\mb{Y}) \nabla \mathcal{X}(\mb{Y})^\intercal$ is positive definite because the mapping $ \mathcal{X}$ is immersion. For the torus surface, we have 
\[
det (\nabla \mathcal{X}(\mb{Y}) \nabla \mathcal{X}(\mb{Y})^\intercal) = 16\pi^4 r^2 (R+r\cos(2\pi v))^2  \ge  16\pi^4 r^2 (R-r)^2 > 0.
\]
On the other hand, for the Klein bottle surface shown in Fig.~\ref{fig-curve-on-klein-bottle}, b) we observe large spectral variations in the $2\times 2$ positive definite matrix $\nabla \mathcal{X}(\mb{Y}) \nabla \mathcal{X}(\mb{Y})^\intercal$. That is, 
\[
det (\nabla \mathcal{X}(\mb{Y}) \nabla \mathcal{X}(\mb{Y})^\intercal) \in (0.0145, 32020).
\]
\end{remark}

\section{Existence and uniqueness of classical H\"older smooth solutions}

In this section, we present   theoretical  results on the existence and uniqueness of the classical H\"older smooth solution to the system of equations (\ref{eq:ab}) for the motion of the time-dependent family of curves $\Gamma_t=\{ \mb{X}(u, t),\ u\in I\},\ t\ge 0$, evolving in $\mathbb{R}^3$. Furthermore, we prove the existence and uniqueness of solutions $\mb{Y}=\mb{Y}(u,t)$ of the non-linear equation (\ref{Yimmersion}) (see Proposition~\ref{3D2D}). We employ the analytical framework developed by Bene\v{s}, Kol{\' a}{\v r}, \v{S}ev\v{c}ovi\v{c} \cite{kolar2022} in the context of curve evolution in $\mathbb{R}^3$ and Mikula and \v{S}ev\v{c}ovi\v{c} \cite{sevcovic2001evolution, MS2004, MMAS2004, MS3} for the evolution of planar curves. The proof of the existence and uniqueness of solutions in  H\"older spaces is based on the abstract theory of analytic semiflows in Banach spaces, as established by DaPrato and Grisvard \cite{daprato}, Angenent \cite{Angenent1990b, Angenent1990} and Lunardi \cite{Lunardi1984}. 
In the proof technique, the position vector equation (\ref{eq:ab}) with uniform tangential velocity $v_T$ is analyzed. 

The nonlinear parabolic equation (\ref{eq:ab}), and similarly (\ref{Yimmersion}), can be rewritten as the abstract parabolic equation: $\partial_t \mb{X} = \mathscr{F}(\mb{X}), \ \mb{X}(0) = \mb{X}_0$, on a scale of suitable Banach spaces. Suppose $0<\varepsilon<1$, and $k\in \mb{N}$. The little H\"older space $h^{k+\varepsilon}(S^1)$ is the Banach space defined as the closure of $C^\infty$ smooth functions defined in the periodic domain $S^1$. The norm is the sum of the $C^k$ norm and the $\varepsilon$-H\"older semi-norm of the $k$-th derivative. Next, we define the following scale of Banach spaces consisting of $(2k+\varepsilon)$-H\"older continuous functions in the periodic domain $I\simeq S^1$:
\begin{equation}
\mc{E}^X_k = h^{2k +\varepsilon}(S^1)\times h^{2k +\varepsilon}(S^1) \times h^{2k +\varepsilon}(S^1), \quad \mc{E}^Y_k = h^{2k +\varepsilon}(S^1)\times h^{2k +\varepsilon}(S^1), \quad k=0, \frac12, 1 .
\label{Espaces}
\end{equation}
For the application of the theory of nonlinear analytic semiflovs due to DaPrato and Grisvard \cite{daprato}, Angenent \cite{Angenent1990b, Angenent1990}, and Lunardi \cite{Lunardi1984}, it is sufficient to prove that, for any $\tilde{\mb{X}}$ linearization $\mathscr{A}=\mathscr{F}'(\tilde{\mb{X}} )$  generates an analytic semigroup in the space in $\mc{E}^Z_0$, and it belongs to the maximal regularity class between Banach spaces $\mc{E}^Z_1$ and $\mc{E}^Z_0$ for $Z\in\{X,Y\}$. 
Now we can state the following result, stating the local existence and uniqueness of solutions to the system of nonlinear geometric equations (\ref{eq:ab}). 

\begin{theorem}\label{theo-main}
Assume ${\mathcal{M}}\in\mathbb{R}^3$ is an embedded manifold given by 
${\mathcal{M}}=\{ \mb{X} \in\mathbb{R}^3,\ f(\mb{X}) =0\}$ where  $f:\mathbb{R}^3\to \mathbb{R}$ is a $C^4$ smooth regular map, i.e., $\nabla f(\mb{X})\not=0$  for $\mb{X}\in {\mathcal{M}}$, and $a=a(\mb{X}, \mb{T}) > 0$ is a $C^4$ smooth function, $a:\mathbb{R}^2 \times \mathbb{R}^2 \to \mathbb{R}$.  Assume that $h:\mathbb{R}\to\mathbb{R}$ is a $C^4$ smooth function, $h(0)=0$. Suppose that the parameterization $\mb{X}_0$ of the initial curve $\Gamma_0$ belongs to the H\"older space $\mc{E}^X_1$, and is uniformly parameterized $|\partial_u\mb{X}_0(u)| = L(\Gamma_0) >0$ for all $u\in I$. Assume that the total tangential velocity $v_T$ preserves the relative local length. Then there exists $T>0$ and the unique solution $\mb{X}$ to the initial value problem:
\[
\partial_t\mb{X} = a \partial^2_{s} \mb{X} + \mb{F}(\mb{X}, \partial_{s} \mb{X}) +\alpha\partial_s\mb{X},
\quad \mb{X}(\cdot, 0)= \mb{X}_0(\cdot), \ u\in I,\ t\in[0,T),
\]
where the external force $\mb{F}$ is given by (\ref{F}). Moreover, 
$\mb{X}\in  C([0,T], \mc{E}^X_1) \cap C^1([0,T], \mc{E}^X_0)$.

\end{theorem}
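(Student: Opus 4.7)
The plan is to recast (\ref{eq:ab}) as an abstract quasilinear parabolic evolution equation $\partial_t \mathbf{X} = \mathscr{F}(\mathbf{X})$ on the Banach scale $(\mathcal{E}^X_0,\mathcal{E}^X_1)$ from (\ref{Espaces}), and to verify the hypotheses of the local existence theorem of Angenent \cite{Angenent1990} based on Da Prato--Grisvard--Lunardi maximal regularity. First I would exploit that $\mathbf{X}_0$ is uniformly parameterized, $|\partial_u\mathbf{X}_0|\equiv L(\Gamma_0)>0$, together with the assumption that $v_T$ preserves the relative local length: these ensure that $g(u,t)=|\partial_u\mathbf{X}(u,t)|$ remains bounded below by a positive constant on a short time interval, so that the arc-length derivatives $\partial_s=g^{-1}\partial_u$ and $\partial_s^2=g^{-2}\partial_u^2-g^{-3}(\partial_u g)\partial_u$ are well-defined, and $\mathscr{F}$ becomes a quasilinear strongly parabolic operator of second order.

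Next I would linearize at an arbitrary reference $\tilde{\mathbf{X}}\in\mathcal{E}^X_1$ with $\tilde g>0$. The resulting operator $\mathscr{A}=\mathscr{F}'(\tilde{\mathbf{X}})$ has principal part $a(\tilde{\mathbf{X}},\tilde{\mathbf{T}})\,\tilde g^{-2}\,\partial_u^2$ acting componentwise on $\mathbf{X}\in(h^{2+\varepsilon}(S^1))^3$; this is strongly elliptic because $a>0$. The lower-order contributions, coming from differentiation of $\mathbf{F}$ in (\ref{F}) and of the $g$-dependent coefficients, involve $\nabla^2 f,\nabla^3 f,h'$ and the derivatives of $a$, all of which produce bounded Nemytski-type operators from $\mathcal{E}^X_1$ into $\mathcal{E}^X_0$ thanks to the $C^4$ smoothness of $f$ and the $C^2$ smoothness of $a$ and $h$; in particular $\mathscr{F}:\mathcal{E}^X_1\to\mathcal{E}^X_0$ is Fréchet $C^1$. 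A scalar strongly elliptic second-order operator with little-Hölder coefficients on $S^1$ generates an analytic semigroup on $h^\varepsilon(S^1)$ with domain $h^{2+\varepsilon}(S^1)$ by Stewart--Lunardi theory, and belongs to the class of operators with maximal regularity in the sense of Da Prato--Grisvard, since $h^\varepsilon$ and $h^{2+\varepsilon}$ are the continuous interpolation spaces between $C(S^1)$ and the $C^2$-domain. Applied componentwise, this gives $\mathscr{A}\in\mathcal{M}(\mathcal{E}^X_1,\mathcal{E}^X_0)$, whence the abstract theorem supplies the unique $\mathbf{X}\in C([0,T],\mathcal{E}^X_1)\cap C^1([0,T],\mathcal{E}^X_0)$ for some $T>0$.

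The main obstacle is the interplay between the geometric and the analytic parts of the argument: the equation is genuinely parabolic only as long as $g(u,t)$ stays uniformly positive, and this is exactly where the tangential velocity hypothesis must do its work. Concretely, a relative-length-preserving choice of $v_T$ (cf.\ Mikula and \v{S}ev\v{c}ovi\v{c} \cite{MS2004,MS3}) forces $g(u,t)/L(\Gamma_t)$ to equal its initial constant value, so the lower bound on $g$ reduces to a lower bound on $L(\Gamma_t)$, which persists on the short existence interval by continuity of $L$. A secondary technical point is verifying Fréchet differentiability of $\mathscr{F}$ in the little-Hölder scale rather than in the classical Hölder scale, but this follows by a standard density argument from the fact that $h^{k+\varepsilon}(S^1)$ is by definition the $C^{k+\varepsilon}$-closure of $C^\infty(S^1)$, so that smooth nonlinear superposition operators inherit Fréchet smoothness from their classical-Hölder counterparts.
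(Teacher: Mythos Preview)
Your proposal is correct and follows essentially the same route as the paper's own proof: both recast (\ref{eq:ab}) as an abstract quasilinear parabolic problem $\partial_t\mb{X}=\mathscr{F}(\mb{X})$ on the little-H\"older scale (\ref{Espaces}), linearize at a reference $\tilde{\mb{X}}$ with $\tilde g>0$, identify the principal part $a(\tilde{\mb{X}},\tilde{\mb{T}})\,\tilde g^{-2}\partial_u^2$, and invoke Da~Prato--Grisvard/Angenent/Lunardi maximal regularity to conclude. The only cosmetic difference is that the paper explicitly splits $\mathscr{A}=\mathscr{A}_0+\mathscr{A}_1$ and uses the intermediate space $\mc{E}^X_{1/2}$ together with Angenent's relative-zero-norm perturbation lemma to transfer maximal regularity from $\mathscr{A}_0$ to $\mathscr{A}$, whereas you absorb the lower-order terms directly into the strongly elliptic linearization; both variants are standard and equivalent.
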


\begin{proof}
We rewrite the non-linear parabolic equation (\ref{eq:ab}) in the form: $\partial_t \mb{X} = \mathscr{F}(\mb{X})$, where $ \mathscr{F}(\mb{X})=a \partial^2_{s} \mb{X} + \mb{F}(\mb{X}, \partial_{s} \mb{X}) +\alpha\partial_s\mb{X}$. Under the assumptions made on functions $a, f$, and $h$, the mapping 
\[
\mc{E}^X_{\frac12} \ni \mb{X} \mapsto \mb{F}(\mb{X}, \partial_{s} \mb{X}) +\alpha\partial_s\mb{X} \in \mc{E}^X_0
\]
is $C^1$ mapping from the Banach space $\mc{E}^X_{\frac12}$  to $\mc{E}^X_0$. 

Assume $\tilde{\mb{X}}$ belongs to the H\"older space $\mc{E}^X_1$, and is uniformly parameterized, $|\partial_u\tilde{\mb{X}}| = L(\tilde{\Gamma}) >0$ for all $u\in I$. Then the linearization $\mathscr{A}=\mathscr{F}'(\tilde{\mb{X}} )$ can be decomposed as follows: $\mathscr{A} = \mathscr{A}_0 + \mathscr{A}_1$, where the principal part $\mathscr{A}_0$ containing the second order derivative has the form
$\mathscr{A}_0 \mb{X} = \frac{\tilde{a}}{L(\tilde{\Gamma})^2} \partial^2_u\mb{X}$, 
where $\tilde{a}={a}(\tilde{\mb{X}}, \tilde{\mb{T}})$,  $\tilde{\mb{T}}=\partial_s \tilde{\mb{X}}$. It is known that $\mathscr{A}_0$ generates an analytic semigroup in space $\mc{E}^X_0$, and it belongs to the maximal regularity class between Banach spaces $\mc{E}^X_1$ and $\mc{E}^X_0$ (cf.  \cite{Angenent1990b, Angenent1990}). Furthermore, the operator $\mathscr{A}_1 = \mathscr{A} - \mathscr{A}_0$ contains the first order derivative of $\mb{X}$. It is a bounded linear operator $\mathscr{A}_1: \mathscr{E}_{\frac12}\to \mathscr{E}_0$. Therefore, the operator $\mathscr{A}_1$ considered as a mapping from $\mathscr{E}_1\to \mathscr{E}_0$ has the relative zero norm with respect to $\mathscr{A}_0$ . Therefore, linearization $\mathscr{A}$ belongs to the maximal regularity class ${\mathcal M}(\mathscr{E}_1,\mathscr{E}_0)$ because this class is closed with respect to perturbation with the relative zero norm (cf.  \cite[Lemma 2.5]{Angenent1990}, DaPrato and Grisvard \cite{daprato}, Lunardi \cite{Lunardi1984}). The proof now follows \cite[Theorem 2.7]{Angenent1990} due to Angenent.
\end{proof}

\begin{theorem}\label{theo-main-Y}
Assume ${\mathcal{M}}$ is a manifold immersed in $\mathbb{R}^3$ given by ${\mathcal{M}}= \{ \mb{X} = \mathcal{X}(\mb{Y}), \quad \mb{Y}\in I \times I\}$ where $\mathcal{X}:I \times I\to \mathbb{R}^3$, is a $C^4$ smooth immersion, $rank(\nabla\mathcal{X}(\mb{Y})) = 2$ for all $\mb{Y}\in I \times I$, and $a=a(\mb{X}, \mb{T}) > 0$ is a   $C^4$   smooth function, $a:\mathbb{R}^3 \times \mathbb{R}^3 \to \mathbb{R}$. Suppose that the initial condition $\mb{Y}_0$ belongs to the H\"older space $\mc{E}^Y_1$, and $|\partial_u\mb{Y}_0(u)| >0$ for all $u\in I$.  Then there exists $T>0$ and the unique solution $\mb{Y}\in  C([0,T], \mc{E}^Y_1) \cap C^1([0,T], \mc{E}^Y_0)$  to the initial value problem:
\[
\partial_t \mb{Y} = a \partial_s^2 \mb{Y} + \mb{G}(\mb{Y}, \partial_s\mb{Y}) +\alpha \partial_s\mb{Y},\quad \mb{Y}(\cdot,0) =\mb{Y}_0(\cdot), \ u\in I,\ t\in[0,T),
\]
where $ds =  |\nabla \mathcal{X}(\mb{Y})^\intercal 
\partial_u \mb Y| du$, $a=a(\mathcal{X}(\mb{Y}), \nabla \mathcal{X}(\mb{Y})^\intercal \partial_{s} \mb{Y})$ and 
  $\mb{G}: \mathbb{R}^2\times  \mathbb{R}^2\to \mathbb{R}^2$ is a $C^4$ smooth function. 
 
\end{theorem}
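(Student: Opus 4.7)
The plan is to mirror the argument of Theorem~\ref{theo-main}, now working on the two-component little Hölder scale $\mc{E}^Y_k = h^{2k+\varepsilon}(S^1)\times h^{2k+\varepsilon}(S^1)$, $k=0,\tfrac12,1$, and recasting (\ref{Yimmersion}) as an abstract parabolic equation $\partial_t\mb{Y} = \mathscr{F}(\mb{Y})$ on this scale. First I would convert arclength derivatives to derivatives with respect to $u$ by writing $g(\mb{Y})=|\nabla\mathcal{X}(\mb{Y})^\intercal \mb{t}(\mb{Y})|\,|\partial_u\mb{Y}|$, so that $\partial_s = g^{-1}\partial_u$ and $\partial_s^2\mb{Y} = g^{-2}\partial_u^2\mb{Y} - g^{-3}(\partial_u g)\partial_u\mb{Y}$. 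The immersion assumption $\mathrm{rank}(\nabla\mathcal{X}) = 2$ implies that $\nabla\mathcal{X}\nabla\mathcal{X}^\intercal$ is uniformly positive definite on the image of any continuous $\mb{Y}(\cdot)$, hence $|\nabla\mathcal{X}(\mb{Y})^\intercal \mb{w}|\ge c\,|\mb{w}|$ for some $c>0$; together with $|\partial_u\mb{Y}_0|>0$ this yields $g(\mb{Y}_0)>0$ uniformly on $I\simeq S^1$, so the equation is uniformly parabolic in $u$ at $t=0$ and remains so on a short time interval by continuous dependence.

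Next I would verify that $\mathscr{F}$ extends to a $C^1$ mapping $\mc{E}^Y_{1/2}\to\mc{E}^Y_0$. The Moore-Penrose pseudoinverse $M(\mb{Y}) = (\nabla\mathcal{X}\nabla\mathcal{X}^\intercal)^{-1}\nabla\mathcal{X}$ is a rational expression in the entries of $\nabla\mathcal{X}$ with nonvanishing denominator $\det(\nabla\mathcal{X}\nabla\mathcal{X}^\intercal)>0$, so the $C^4$ smoothness of $\mathcal{X}$ turns $\mb{Y}\mapsto M(\mb{Y})$ into a smooth Nemytskii operator between the relevant little Hölder spaces. The quadratic expression $\partial_s\mb{Y}^\intercal \nabla^2\mathcal{X}(\mb{Y})\,\partial_s\mb{Y}$ maps $\mc{E}^Y_{1/2}$ into $h^{\varepsilon}(S^1)\times h^{\varepsilon}(S^1) = \mc{E}^Y_0$ by the Hölder product rule (one $\partial_s\mb{Y}$ lies in $h^{1+\varepsilon}$), and an analogous estimate holds for the composition with $\mb{F}$ built from (\ref{F}) using the $C^2$ regularity of $a$.

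Then I would linearize $\mathscr{F}$ at a reference $\tilde{\mb{Y}}\in\mc{E}^Y_1$ with $|\partial_u\tilde{\mb{Y}}|>0$, decomposing $\mathscr{A}=\mathscr{F}'(\tilde{\mb{Y}}) = \mathscr{A}_0+\mathscr{A}_1$ where the principal part
\[
\mathscr{A}_0\mb{Y} \;=\; \frac{\tilde{a}}{\tilde{g}^2}\,\partial_u^2\mb{Y}
\]
acts diagonally on the two components as a scalar multiple of $\partial_u^2$. By the results of Angenent \cite{Angenent1990b, Angenent1990}, the scalar version on $h^{2+\varepsilon}(S^1)$ generates an analytic semigroup on $h^{\varepsilon}(S^1)$ and belongs to the maximal regularity class; this property transfers componentwise to the pair $(\mc{E}^Y_1,\mc{E}^Y_0)$. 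The remainder $\mathscr{A}_1$ contains at most first-order derivatives of $\mb{Y}$, hence is bounded from $\mc{E}^Y_{1/2}$ to $\mc{E}^Y_0$ and, viewed as a map $\mc{E}^Y_1\to\mc{E}^Y_0$, has relative zero norm with respect to $\mathscr{A}_0$. Closedness of $\mathcal{M}(\mc{E}^Y_1,\mc{E}^Y_0)$ under such perturbations (cf.\ \cite[Lemma 2.5]{Angenent1990}, \cite{daprato, Lunardi1984}) places $\mathscr{A}$ itself in the maximal regularity class, and \cite[Theorem 2.7]{Angenent1990} then supplies the local existence, uniqueness, and the regularity $\mb{Y}\in C([0,T],\mc{E}^Y_1)\cap C^1([0,T],\mc{E}^Y_0)$.

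The main obstacle will be the second step: the nonlinearity $\mb{G}$ simultaneously involves the matrix-valued map $\mb{Y}\mapsto M(\mb{Y})$, the quadratic form in $\partial_s\mb{Y}$, and the composition with the force $\mb{F}$ from (\ref{F}) whose denominator $|\nabla f|^2$ must be kept away from zero on the immersed image of the surface; tracking that all these compositions and products stay inside the little Hölder scale with at most a one-step loss of smoothness, and doing so $C^1$-continuously in $\mb{Y}$, is where the technical work concentrates. Preserving $g(\mb{Y}(\cdot,t))>0$ on the short time interval is a related but more elementary continuity argument that ensures the scheme does not degenerate.
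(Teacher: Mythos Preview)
Your proposal is correct and follows essentially the same route as the paper's own proof, which simply asserts that $\mb{Y}\mapsto \mb{G}(\mb{Y},\partial_s\mb{Y})+\alpha\partial_s\mb{Y}$ is $C^1$ from $\mc{E}^Y_{1/2}$ to $\mc{E}^Y_0$ and then defers entirely to the argument of Theorem~\ref{theo-main}. Your version is in fact considerably more detailed than the paper's; the only superfluous concern is the reference to the specific force (\ref{F}) and its denominator $|\nabla f|^2$, since in the immersed setting there is no level-set function $f$ and Theorem~\ref{theo-main-Y} treats $\mb{F}$ as a generic smooth function inherited from (\ref{Ximmersion}).
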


\begin{proof}
Based on the assumptions made on the immersion mapping $\mathcal{X}:I \times I\to \mathbb{R}^3$, we have the following:
\[
\mc{E}^Y_{\frac12} \ni \mb{Y} \mapsto \mb{G}(\mb{Y}, \partial_{s} \mb{Y}) +\alpha\partial_s\mb{Y} \in \mc{E}^Y_0
\]
is a $C^1$ mapping from the Banach space $\mc{E}^Y_{\frac12}$  to $\mc{E}^Y_0$. 
The rest of the proof is essentially the same as that of Theorem~\ref{theo-main}. 
\end{proof}
 
In general, we cannot take $T = +\infty$. Indeed, a closed curve on a torus or plane surface may shrink to a point in finite time.

\section{Flowing finite volumes numerical discretization scheme}

In this section, we present a numerical discretization scheme for solving the system of equations (\ref{eq:ab}) with tangential velocity $\alpha$. The discretization utilizes the method of lines with spatial discretization achieved through the finite-volume method. We focus on the evolution of the curves $\Gamma_t, t\ge 0$, satisfying the governing equation: 
\begin{equation}
    \label{eq:num_start}
    \partial_t \mb{X} = a \partial_s^2 \mb{X}
    + \mb{F}(\mb{X}, \partial_s\mb{X})
    + \alpha \mb{T}.
\end{equation}
We consider $M$ discrete nodes $\mb{X}_k = \mb{X}(u_k)$,  $k = 0,1,2, \ldots M$, $u_k = kh \in [0,1]$, where $h=1/M$, $\mb{X}_0 = \mb{X}_M$ along the curve $\Gamma_t$. The dual nodes are defined as $\mb{X}_{k \pm \frac12} = \mb{X}(u_k \pm h/2)$ (see Fig. \ref{FVMfig}) and $(\mb{X}_k + \mb{X}_{k+1}) / 2$ is the midpoint of the line segment connecting nodes $\mb{X}_k$ and $\mb{X_{k+1}}$. This midpoint differs from $\mb{X}_{k \pm \frac12} \in \Gamma_t$. The $k$-th segment $\mathcal{S}_k$ of $\Gamma_t$ between the nodes $\mb{X}_{k + \frac12}$ and $\mb{X}_{k - \frac12}$ represents the finite volume. Integration of equation (\ref{eq:num_start}) over such a  volume yields
\begin{equation}
\label{eq:num_int}
\begin{split}
\int_{u_{k-\frac12}}^{u_{k+\frac12}} \partial_t \mb{X} |\partial_u \mb{X}| d u = &
\int_{u_{k-\frac12}}^{u_{k+\frac12}} a\frac{\partial}{\partial_u} \left( \frac{\partial_u \mb{X}}{|\partial_u \mb{X}|} \right) d u
+  \int_{u_{k-\frac12}}^{u_{k+\frac12}} \mb{F} |\partial_u \mb{X}| d u
+ \int_{u_{k-\frac12}}^{u_{k+\frac12}} \alpha \partial_u \mb{X} d u.
\end{split}
\end{equation}

\begin{figure}
\begin{center}
\includegraphics[width=0.35\textwidth]{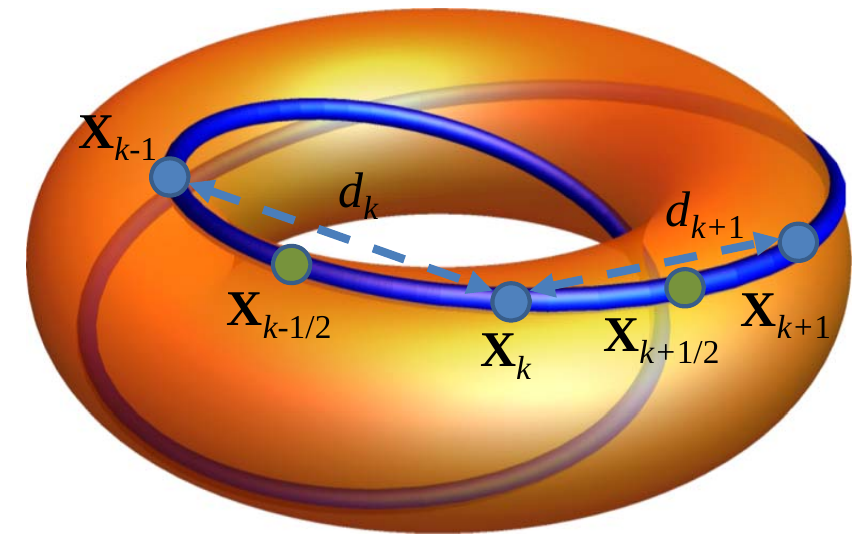}
\end{center}
\caption{Discretization of a segment of a 3D curve on a surface by the method of flowing finite volumes.
}
\label{FVMfig}
\end{figure}

To discretize the governing system of equations, we assume that $\partial_t \mb{X}, \partial_u \mb{X}, \mb{F}, \alpha, \kappa, a, b, \mb{T}$, and $\mb{N}$ remain constant on the finite volume $\mathcal{S}_k$ bounded by the nodes $\mb{X}_{k - \frac12}$ and $\mb{X}_{k + \frac12}$. These variables assume the values $\partial_t \mb{X}_k, \partial_u \mb{X}_k, \mb{F}_k, \alpha_k, \kappa_k, \mb{T_k}$, and $\mb{N_k}$, respectively. In the approximation of the nonlocal vector function $\mb{F}_k$, the curve $\Gamma_t$ used to define $\mb{F}$ is replaced by a polygonal curve having vertices at $(\mb{X}_0, \mb{X}_1, \ldots, \mb{X}_M)$. The numerical approximation of the tangential velocity $\alpha_k$ is summarized at the end of this section.

Let us denote $d_k = |\mb{X}_k - \mb{X}_{k-1}|$ for $k=1,2,\ldots,M,M+1$, where $\mb{X}_M = \mb{X}_0$ and $\mb{X}_{M+1} = \mb{X}_{1}$ for the closed curve $\Gamma$ and we approximate the integral expressions in (\ref{eq:num_int}) by means of the flowing finite volume method as follows:
\begin{equation*}
\begin{split}
& \int_{u_{k-\frac12}}^{u_{k+\frac12}} \partial_t \mb{X} |\partial_u \mb{X}| d  u
 \approx \frac{d  \mb{X}_k}{d t} \frac{d_{k+1} + d_k}{2},
\quad 
\int_{u_{k-\frac12}}^{u_{k+\frac12}}
a \partial_u \frac{\partial_u \mb{X}}{|\partial_u \mb{X}|} d u
\approx
a_k \left(\frac{\mb{X}_{k+1} - \mb{X}_k}{d_{k+1}} - \frac{\mb{X}_k - \mb{X}_{k-1}}{d_k}\right),
\\
&\int_{u_{k-\frac12}}^{u_{k+\frac12}} \mb{F} |\partial_u \mb{X}| d u
\approx  \mb{F}_k \frac{d_{k+1} + d_{k}}{2},
\qquad
\int_{u_{k-\frac12}}^{u_{k+\frac12}} \alpha \partial_u \mb{X} d u\approx  \alpha_k \frac{\mb{X}_{k+1} - \mb{X}_{k-1}}{2}.
\end{split}
\end{equation*}
Here $a_k=a(\mb{X}_k, \mb{T}_k)$ is the diffusion coefficient evaluated at $(\mb{X}_k, \mb{T}_k)$.    The estimation of the nonnegative curvature $\kappa$ along with the tangent vector $\mb{T}$ and the normal vector $\mb{N}$, where $\kappa \mb{N} = \partial_s \mb{T}$, can be expressed as follows:
\begin{equation*}
\begin{split}
\mb{N}_k & \approx \frac{1}{\delta + \kappa_k} \frac{2}{d_k + d_{k+1}}
\left(\frac{\mb{X}_{k+1} - \mb{X}_k}{d_{k+1}} - \frac{\mb{X}_k - \mb{X}_{k-1}}{d_k}\right), 
\qquad
\mb{T}_k  \approx \frac{\mb{X}_{k+1} - \mb{X}_{k-1}}{d_{k+1}+d_k},
\\
\kappa_k & \approx  
\left|\frac{2}{d_k + d_{k+1}}
\left(\frac{\mb{X}_{k+1} - \mb{X}_k}{d_{k+1}} - \frac{\mb{X}_k - \mb{X}_{k-1}}{d_k}\right)
\right|,
\end{split}
\end{equation*}
where $0<\delta\ll 1$ is a small regularization parameter.

The semidiscrete scheme for resolving (\ref{eq:num_start}) is expressed as follows.
\begin{eqnarray}
\label{ODEs}
\frac{d  \mb{X}_k}{d t}
&=&
a_k  \frac{2}{d_{k+1}+d_k}\left(\frac{\mb{X}_{k+1} - \mb{X}_k}{d_{k+1}} - \frac{\mb{X}_k - \mb{X}_{k-1}}{d_k}\right)
  + \mb{F}_k 
 + \alpha_k \frac{\mb{X}_{k+1} - \mb{X}_{k-1}}{d_{k+1} + d_{k}},
 \nonumber
\\
\mb{X}_k(0) &=& \mb{X}_{ini}(u_k), \quad \text{for} \ k = 1, \ldots, M.
\end{eqnarray}

Recall that the tangential component $\alpha$ of the velocity vector for evolving closed curves $\{\Gamma_t, t\ge 0\}$ maintains their shape unchanged (cf. Epstein \cite{ASENS_1990_4_23_2_229_0}). However, for a numerical solution of (\ref{eq:num_start}), the careful selection of the tangential velocity functional $\alpha$ is key to preserve the stability of the computational procedure (cf. Mikula and \v{S}ev\v{c}ovi\v{c} \cite{sevcovic2001evolution, MS2004, MMAS2004}). The significance of tangential velocity is also substantial in theoretical analyzes of curve evolution, as shown by the works of Hou et al. \cite{Hou}, Kimura \cite{Kimura}.
  
Barrett et al.\cite{Barret2010, Barret2012} and Elliott and Fritz \cite{Elliot2017}, explored gradient and elastic flows for closed and open curves in $\mathbb{R}^d$, where $d\ge 2$, and formulated a numerical approximation method to effectively redistribute the tangential component. Furthermore, the relevance of tangential velocity is recognized in material science investigations by Bene\v{s}, Kol\'a\v{r}, and \v{S}ev\v{c}ovi\v{c} \cite{BKS2017} and in the context of interactive evolving curves \cite{BKS2020}. In a different field, Garcke, Kohsaka, and \v{S}ev\v{c}ovi\v{c} \cite{Garcke2009} applied uniform tangential redistribution to theoretically confirm the nonlinear stability of curvature-induced flows with triple junctions in planes. Reme\v{s}\'{\i}kov\'a \emph{et al.} \cite{MS2014} analyzed the tangential redistribution effects for the flows of closed manifolds in $\mathbb{R}^n$. 
 
Calculating the time change of the ratio of the relative local length $|\partial_u\mb{X}(u,t)|$ and the total curve length $L(\Gamma_t) = \int_0^1 |\partial_u\mb{X}(u,t)| du$, it is possible to derive an equation for the unknown $\alpha$ (cf. \cite{vsevvcovivc2011evolution}):
 
\begin{equation}
\frac{\partial }{\partial t}
\frac{|\partial_u\mb{X}(u,t)|}{L(\Gamma_t)} = 
\frac{|\partial_u\mb{X}(u,t)|}{L(\Gamma_t)}
\left(
\partial_{s} \alpha  -\kappa v_N +  \langle  \kappa v_N\rangle \right), 
\quad\text{where}\ \langle  \kappa v_N\rangle = \frac{1}{L(\Gamma_t)} \int_{\Gamma_t} \kappa v_N ds.
\label{alpha}
\end{equation}
The meaning of $\langle \cdot \rangle$ is the average value of a scalar quantity along the curve $\Gamma_t$. Therefore, the ratio $|\partial_u\mb{X}(u,t)|/L(\Gamma_t)$ is constant with respect to the time $t$, i.e.
\begin{equation}
\frac{|\partial_u\mb{X}(u,t)|}{L(\Gamma_t)} = \frac{|\partial_u\mb{X}(u,0)|}{L(\Gamma_0)},
\quad \text{for any} \ t\ge0, 
\label{uniformalpha}
\end{equation}
provided that the tangential velocity satisfies $\partial_{s} \alpha = \kappa v_N -  \langle  \kappa v_N\rangle$. Another suitable choice of the tangential velocity $\alpha$ is the so-called asymptotically uniform tangential velocity proposed and analyzed by Mikula and \v{S}ev\v{c}ovi\v{c} in \cite{MS2004, MMAS2004}. 
 
Taking into account a positive damping parameter $\omega$ and tangential velocity $\alpha$ as the solution of the following equation
 
\begin{equation}
\partial_{s} \alpha = \kappa v_N  - 
 \langle  \kappa v_N\rangle  + \left( \frac{L(\Gamma_t)}{|\partial_u\mb{X}(u,t)|} - 1\right) \omega ,
\label{alpha-asymptotic}
\end{equation}
then, using (\ref{alpha}) we obtain
$\lim_{t\to \infty} \frac{|\partial_u\mb{X}(u,t)|}{L(\Gamma_t)} =1 $ uniformly with respect to $u\in[0,1]$ provided that $\omega>0$ is a positive constant. This means that the redistribution becomes asymptotically uniform. 
The numerical approximation of the tangential velocity (\ref{alpha-asymptotic}) follows from \cite{vsevvcovivc2011evolution}. It requires discrete values of curvature $\kappa_k$, normal velocity $v_{N,k}$, and segment lengths $d_k$, and a straightforward trapezoidal integration is used
(cf. \cite{vsevvcovivc2011evolution}).  The values $\alpha_0 = \alpha_M$ are chosen so that $\sum_{i=1}^M \alpha_i (d_{i+1}+d_i)/2 = 0$.
Then the values $\alpha_k$ for $k=0,1,\ldots,M$ are uniquely given and the direct integration of (\ref{alpha-asymptotic}) leads to the following formulae
\begin{equation}
\begin{split}
\label{eq:num_alpha}
\alpha_i &= \alpha_1 
+ \sum_{k=2}^i \left[ \kappa_i v_{N,i} d_i  - 
 \langle  \kappa v_{N}\rangle d_i + \left( \frac{L(\Gamma_t)}{M} - d_i\right) \omega 
 \right], \quad i = 2,\ldots,M, \\
 \alpha_1 & = - \frac{1}{\sum_{i=1}^M \frac{(d_{i+1} + d_i)}{2}} 
 \left\{
 \sum_{i=2}^M \frac{(d_{i+1} + d_i)}{2} 
 \left(
 \sum_{k=2}^i \left[ \kappa_i v_{N,i} d_i  - 
 \langle  \kappa v_{N}\rangle d_i + \left( \frac{L(\Gamma_t)}{M} - d_i\right) \omega 
 \right]
 \right)
 \right\}.
\end{split}    
\end{equation}

\begin{remark} 
We present a finite difference numerical approximation of the parabolic equation (\ref{Yimmersion}) that governs the flow on an immersed manifold. The second derivative $\partial_s^2 \mb{Y}_k$ appearing in (\ref{Yimmersion}) is approximated as follows: 
 
\begin{eqnarray*}
&&
\frac{|\mb{Y}_{k+1} - \mb{Y}_k|+|\mb{Y}_{k} - \mb{Y}_{k-1}|}{2} \partial_s^2 \mb{Y}_k\approx
\int_{u_{k-\frac12}}^{u_{k+\frac12}}
\frac{1}{| \nabla \mathcal{X}(\mb{Y})^\intercal \mb{t}(\mb{Y})|}
\frac{\partial}{\partial u} \left( \frac{1}{| \nabla \mathcal{X}(\mb{Y})^\intercal \mb{t}(\mb{Y})|}\frac{\partial_u \mb{Y}}{|\partial_u \mb{Y}|}\right) d u
\\
&&\approx
\frac{1}{| \nabla \mathcal{X}(\mb{Y}_k)^\intercal \mb{t}(\mb{Y}_k)|}\left(
   \frac{1}{| \nabla \mathcal{X}(\mb{Y}_{k+1})^\intercal \mb{t}(\mb{Y}_{k+1})|} 
 \frac{\mb{Y}_{k+1} - \mb{Y}_k}{|\mb{Y}_{k+1} - \mb{Y}_k|} - 
  \frac{1}{| \nabla \mathcal{X}(\mb{Y}_k)^\intercal \mb{t}(\mb{Y}_k)|}
 \frac{\mb{Y}_k - \mb{Y}_{k-1}}{|\mb{Y}_k - \mb{Y}_{k-1}|}\right)
\end{eqnarray*}
where $\mb{t}(\mb{Y}_k)\approx (\mb{Y}_k - \mb{Y}_{k-1})/|\mb{Y}_k - \mb{Y}_{k-1}|$.
Similarly,  the first derivatives   $\partial_s\mb{Y}$ and $\partial_t\mb{Y}$ in  (\ref{Yimmersion})   are approximated by means of finite differences as follows: 
\[
 \partial_s \mb{Y}_k = \frac{1}{| \nabla \mathcal{X}(\mb{Y}_k)^\intercal \mb{t}(\mb{Y}_k)|}  \partial_r \mb{Y}_k  \approx
\frac{1}{| \nabla \mathcal{X}(\mb{Y}_k)^\intercal \mb{t}(\mb{Y}_k)|}
 \frac{\mb{Y}_{k} - \mb{Y}_{k-1}}{|\mb{Y}_{k} - \mb{Y}_{k-1}|}, 
\]
\[
\int_{u_{k-\frac12}}^{u_{k+\frac12}} \partial_t \mb{Y} |\partial_u \mb{Y}| d u
 \approx \frac{d  \mb{Y}_k}{d t} \frac{d_{k+1} + d_k}{2}, \quad   d_k = |\mb{X}_k - \mb{X}_{k-1}|.  
\]
We approximate the partial derivatives $\frac{\partial \mathcal{X}_i}{\partial Y_j}, i=1,2,3, j=1,2,$ by   means   of finite diferences. The $2\times 2$ matrix $\nabla \mathcal{X}(\mb{Y}) \nabla \mathcal{X}(\mb{Y})^\intercal$ can be explicitly inverted. Notice that this matrix is regular because 
$\mathcal{X}$ is assumed to be an immersion. 

\end{remark}

\section{Examples}
In our examples, we consider the evolution of an initial Fourier curve parameterized  by a finite trigonometric  series in the parameter $u\in I$. Here we remind the reader that $I$ is identified with the unit circle and $I=\mathbb{R}/\mathbb{Z}\simeq S^1$. 
In all of our numerical experiments, we use $M=200$ discretization nodes, uniform tangential redistribution, and the regularization parameter $\delta$ in the discrete approximation of curvature was set to $\delta = 10^{-5}$. 
The resulting system of ODEs (\ref{ODEs}) is numerically solved using the fourth-order explicit Runge-Kutta-Merson method, incorporating automatic time step control with a tolerance level of $10^{-3}$ (refer to \cite{0965-0393-24-3-035003}). The initial time step was selected as $4h^2$, where $h=1/M$ denotes the spatial mesh size.

\subsection{Evolution of knotted curves on torus}


As an example of a torus, we can consider immersion $\mathcal{X}: I\times I\to\mathbb{R}^3$ defined as:
\begin{equation}
\label{eq:torus}
\mathcal{X}(u, v) = \left(
(r\cos(2\pi v) + R) \sin(2\pi  u),\ 
(r\cos(2\pi v) + R) \cos(2\pi u),\ 
r\sin(2\pi v) \right)^\intercal,
\end{equation}
where $0<r < R $ and $u,v\in I$.
The torus surface can also be defined as an embedded manifold ${\mathcal{M}}=\{ \mb{X}=(X_1,X_2,X_3)^\intercal\in\mathbb{R}^3,\ f(\mb{X}) =0\}$ where 
\[
f(\mb{X}) = ( (X_1^2+X_2^2)^\frac12 -R )^2 + X_3^2 -r^2.
\]
Its gradient $\nabla f(\mb{X})$ and the  Hesse   matrix $\nabla^2 f(\mb{X})$ are  given by
\begin{eqnarray}
\nabla f(\mb{X}) &=& 2 \mb{X} - 2 R \left( X_1/(X_1^2+X_2^2)^\frac12, \  X_2/(X_1^2+X_2^2)^\frac12,\ 0 \right)^\intercal,
\\
\vert \nabla f(\mb{X})\vert   &=& 2 \left( X_1^2+X_2^2+X_3^2 - 2R (X_1^2+X_2^2)^\frac12 +R^2\right)^\frac12,
\\
\nabla^2 f(\mb{X}) &=& 2 \mb{I} - 2 \frac{R}{(X_1^2+X_2^2)^\frac32} (X_2, -X_1,0) (X_2, -X_1,0)^\intercal,
\\
\mb{T}^\intercal\nabla^2 f(\mb{X}) \mb{T} &=& 2 - 2 \frac{R}{(X_1^2+X_2^2)^\frac32} (T_1 X_2 - T_2 X_1)^2,
\end{eqnarray}
where the unit tangent vector $\mb{T}=(T_1,T_2,T_3)^\intercal$.
The torus surface is shown in Fig.~\ref{fig-curve-on-torus}. The initial curve $\mb{X}_0$ derived from the mapping (\ref{eq:torus}) is parameterized by
\begin{equation}
\mb{X}_0(u) = \mathcal{X}(k u, l u), \quad u\in I,
\label{parameterization}
\end{equation}
where $k=2, l=3$, $r=1$ and $R=4$. Its time evolution is shown in Fig.~\ref{fig-curve-on-torus-2-3}. The curve shrinks and converges to the stationary state with constant length as suggested in Fig.~\ref{fig-curve-on-torus-length} a).

\begin{figure}
\begin{center}

\includegraphics[width=0.35\textwidth]{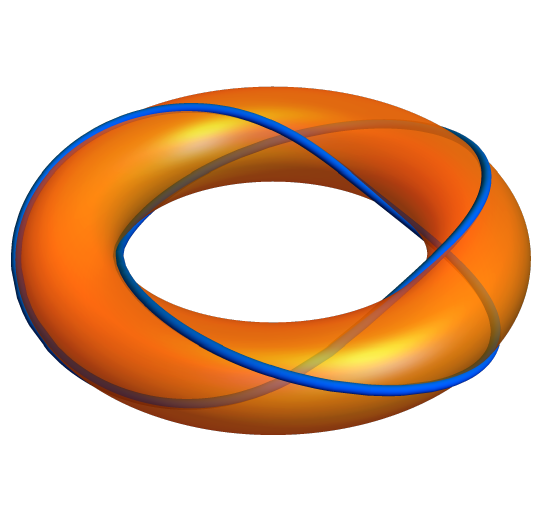}
\quad
\includegraphics[width=0.3\textwidth]{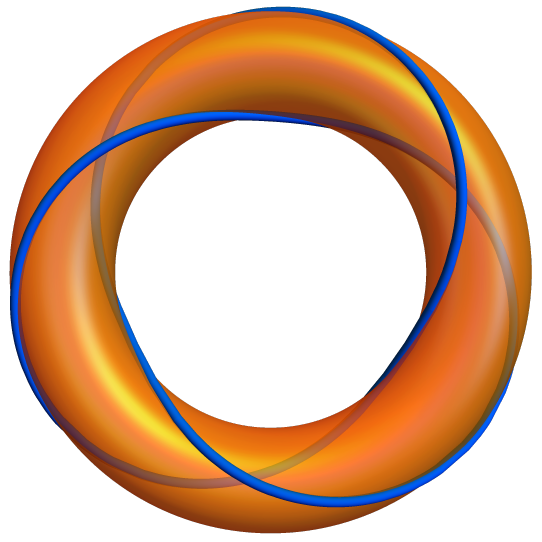}

{\small Initial curve $t=0$}
\medskip

\includegraphics[width=0.35\textwidth]{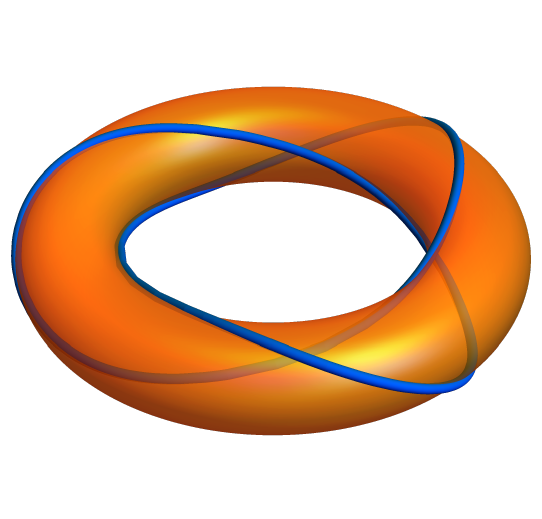}
\quad
\includegraphics[width=0.3\textwidth]{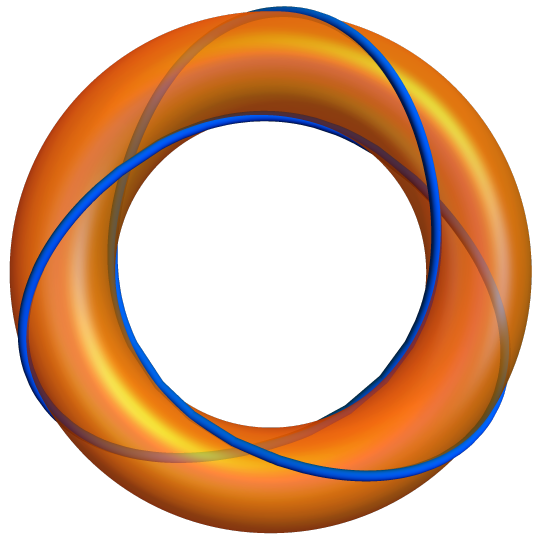}

{\small Intermediate curve $t=2.5$}
\medskip

\includegraphics[width=0.35\textwidth]{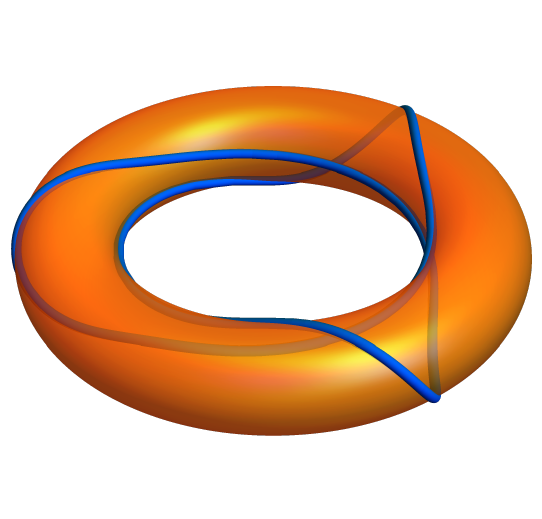}
\quad
\includegraphics[width=0.3\textwidth]{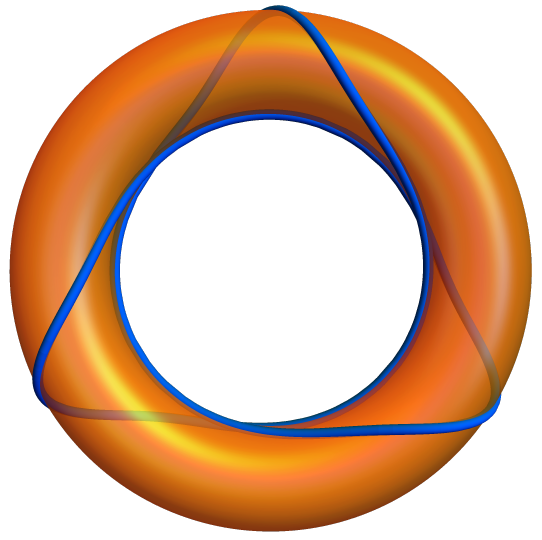}

{\small Limiting stationary curve $t=22.5$}

\caption{Time evolution of a knotted Fourier curve belonging to the orientable torus surface with parameters   $r=1, R=4$,   and  $k=2$, $l=3$  (see (\ref{parameterization})). } 
\label{fig-curve-on-torus-2-3}
\end{center}
\end{figure}

\subsection{Attraction of curves by a torus surface}

In this part, we present an example of the evolution of initial closed curves belonging to a small neighborhood of the given surface $\mathcal{M}$. 

The following computational example demonstrates how an initially knotted curve evolves according to the geometric evolution equation (\ref{eq:ab}) driven by the force term (\ref{F}). 
The reference surface is the torus given by immersion (\ref{eq:torus}) with $r = 1$, $R=4$. The initial curve $\mb{X}_0$ is parameterized by mapping (\ref{eq:torus}) as 
\[
\mb{X}_0(u) = \mathcal{X}(k u, l u), \quad u\in I,
\]
where $k=3, l=5$, $r=2$ and $R=4$. The time evolution of such an inflated initial curve is shown in Fig.~\ref{fig-curve-attract}. The curve continues to shrink until it attaches to the torus surface and eventually finds its stationary state with constant length, as suggested in Fig.~\ref{fig-curve-on-torus-length} b).

\begin{figure}
\begin{center}

\includegraphics[width=0.35\textwidth]{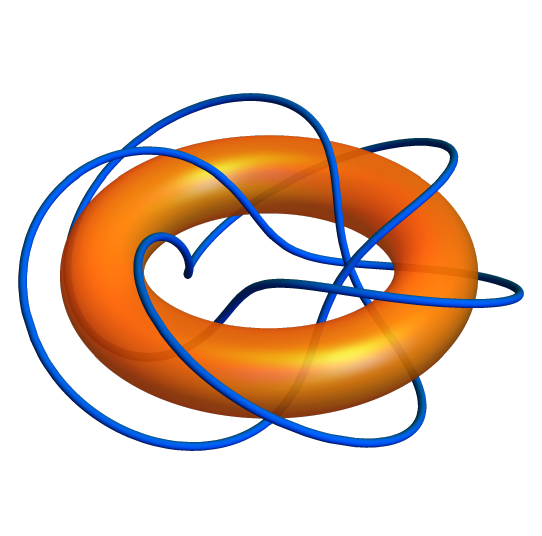}
\quad
\includegraphics[width=0.3\textwidth]{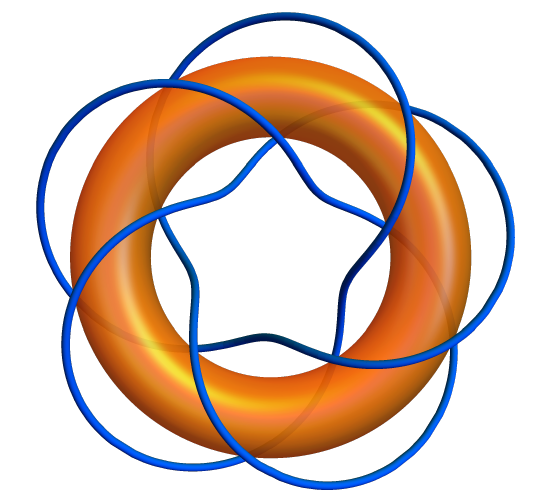}

{\small Initial curve $t=0$}
\medskip

\includegraphics[width=0.35\textwidth]{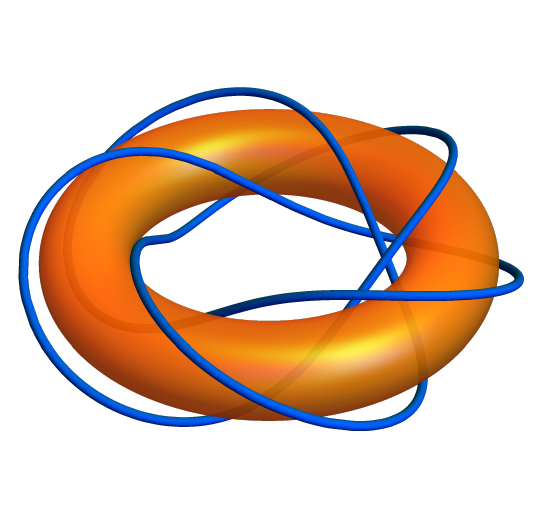}
\quad
\includegraphics[width=0.3\textwidth]{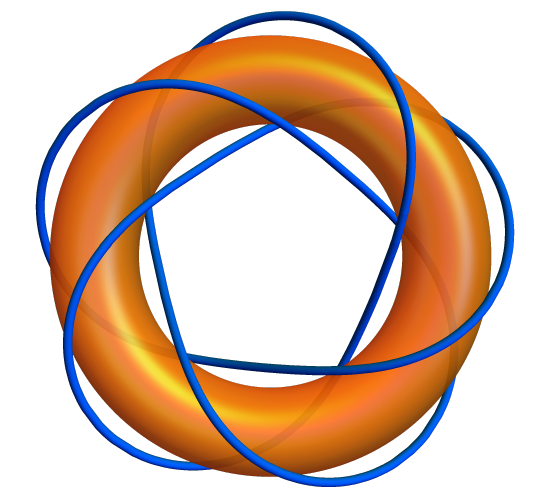}

{\small Intermediate curve $t=1$}
\medskip

\includegraphics[width=0.35\textwidth]{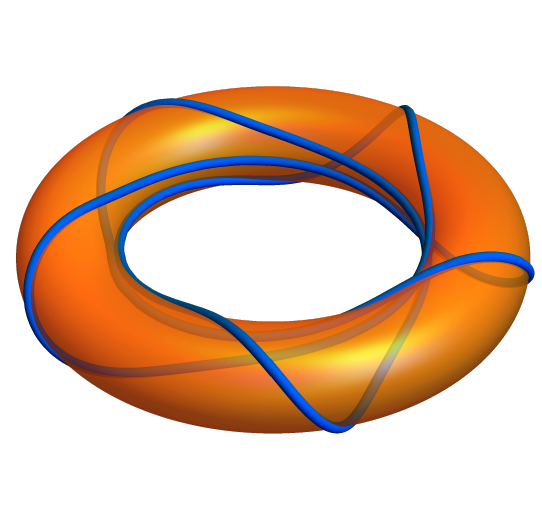}
\quad
\includegraphics[width=0.3\textwidth]{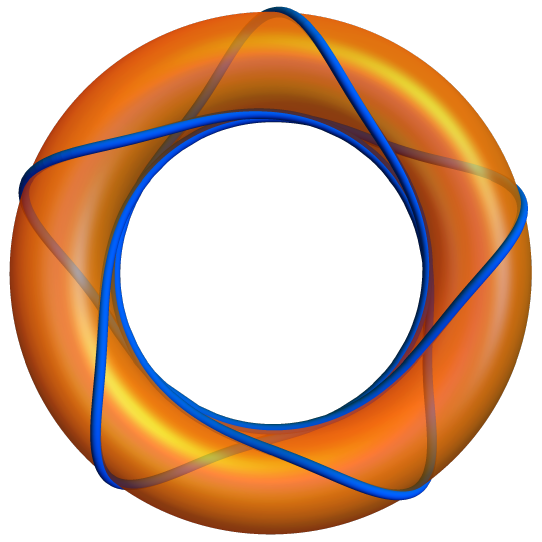}

{\small Limiting stationary curve $t=19.75$}

\caption{Time evolution of initially inflated knotted curve belonging to the orientable torus surface with parameters   $r=1, R=4$.   Topologically more complex case corresponding to the choice $k=3$, $l=5$ in initial condition   (\ref{parameterization}).}

\label{fig-curve-attract}
\end{center}
\end{figure}

\begin{figure}
\begin{center}

\includegraphics[width=0.42\textwidth]{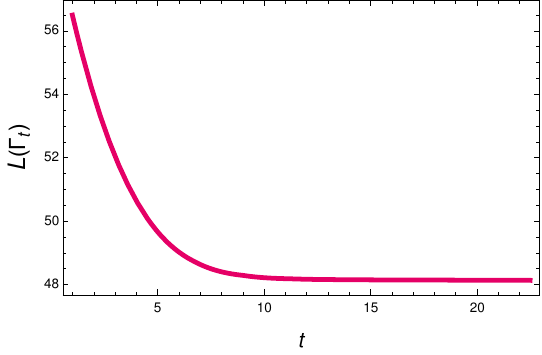}
\quad 
\includegraphics[width=0.42\textwidth]{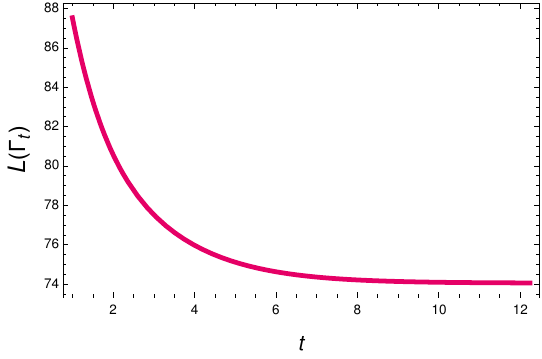}

{\small a) \hskip 7.5truecm b)}

\caption{Lengths $L(\Gamma_t)$ of shrinking curves on the torus (\ref{eq:torus}) - a) and attaching to the torus (\ref{eq:torus}) - b).} 

\label{fig-curve-on-torus-length}
\end{center}
\end{figure}

\begin{figure}
\begin{center}

\includegraphics[width=0.32\textwidth]{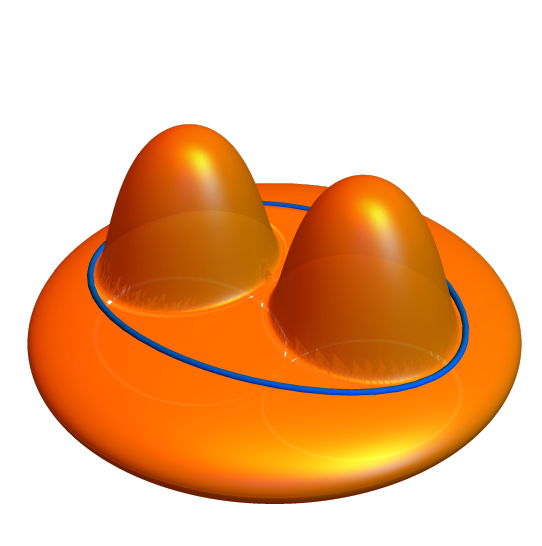}
\quad
\includegraphics[width=0.3\textwidth]{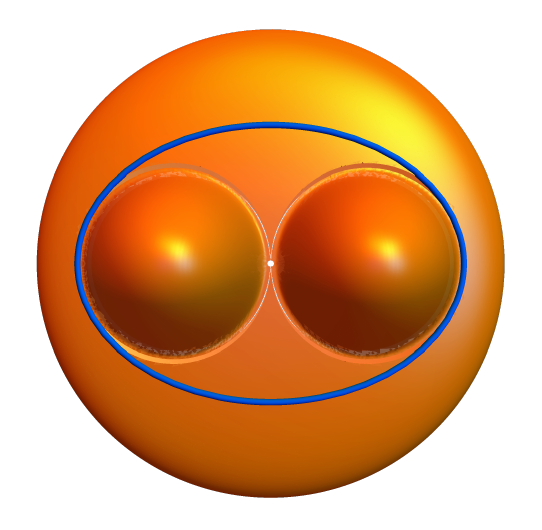}

{\small Initial curve $t=0$}
\medskip

\includegraphics[width=0.32\textwidth]{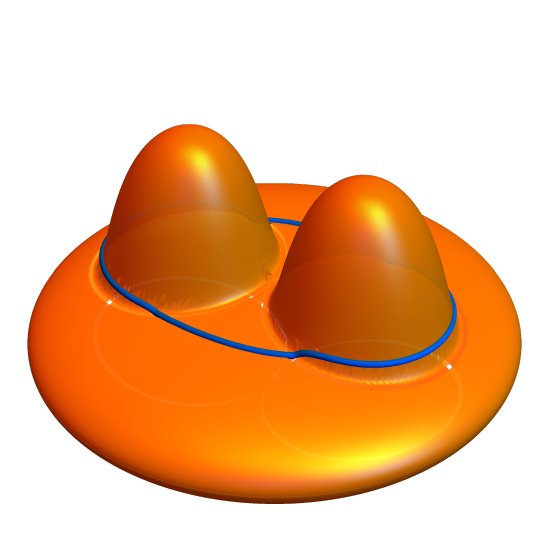}
\quad
\includegraphics[width=0.3\textwidth]{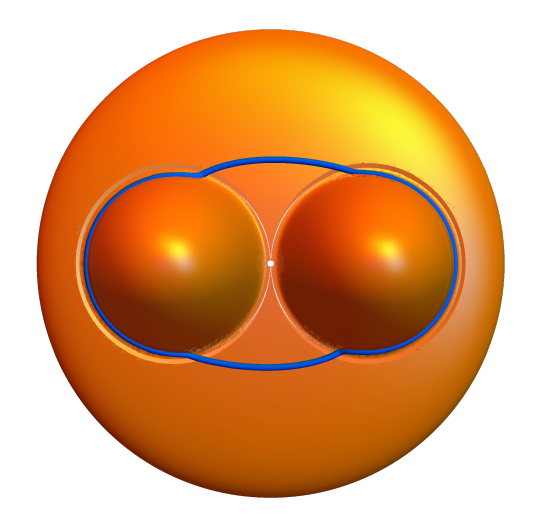}

{\small Intermediate curve $t=4.5$}
\medskip

\includegraphics[width=0.32\textwidth]{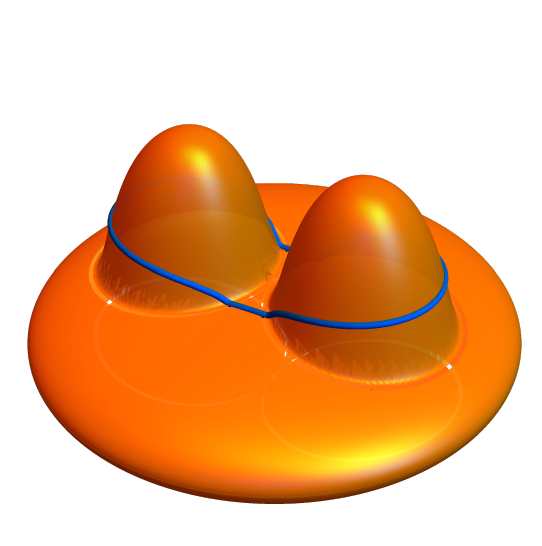}
\quad
\includegraphics[width=0.3\textwidth]{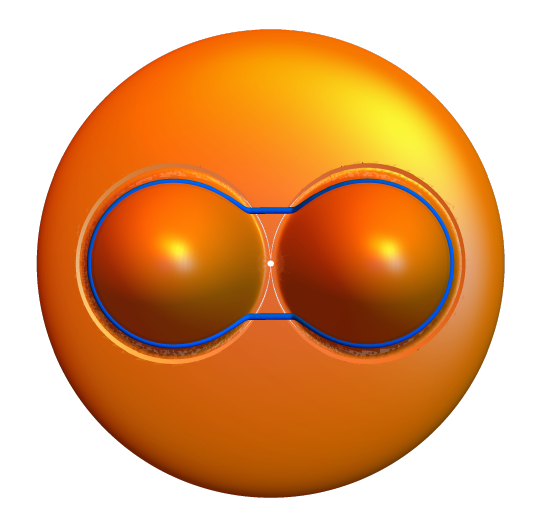}

{\small Limiting stationary curve $t=13.5$}

\caption{Evolution of a simple curve belonging to the orientable surface of genus 0 with humps.}

\label{fig-curve-on-dvakopce}
\end{center}
\end{figure}

\subsection{Evolution of simple curves on surface with  genus   0  with humps}

The last example is shown in Fig.~\ref{fig-curve-on-dvakopce}. 
${\mathcal{M}}=\{ \mb{X}=(X_1,X_2,X_3)^\intercal\in\mathbb{R}^3,\ f(\mb{X}) =0\}$ where 
\[
f(\mb{X}) = X_1^2+X_2^2 + c^2 (X_3-\phi(X_1, X_2))^2 -r^2, 
\quad \phi(X_1, X_2) = h(X_1-1, X_2)  + h(X_1+1, X_2).
\]
 Here   $h$ is a smooth bump function, $h(X_1, X_2) = v\, 2^{-1/(1-X_1^2 - X_2^2)}$ for $X_1^2+X_2^2<1$, and $h(X_1, X_2) = 0$, otherwise. In the example shown in Fig.~\ref{fig-curve-on-dvakopce} we set $r=2.5, c=4$, and $v=3$. The initial curve is an ellipse projected onto the surface, i.e.
\[
\Gamma_0=\{ \mb{X}=(X_1, X_2, X_3), \quad X_1^2/2 + X_2^2 = 2, \quad X_3= 
c^{-1} (r^2 - X_1^2 -X_2^2)^\frac12 + \phi(X_1, X_2)\}.
\]

\section{Conclusion}

In this paper, we investigated the curvature-driven flow of a family of closed curves evolving on an embedded or immersed manifold in the three-dimensional Euclidean space. We analyzed the qualitative behavior of this flow. Using the abstract theory of analytic semi-flows in Banach spaces, we prove the local existence and uniqueness of H\"older smooth solutions to the governing system of evolution equations for the curve parametrization. We demonstrate the behavior of sulutions in several computational examples constructed by means of the flowing finite-volume method and asymptotically uniform tangential redistribution of discretization points. 

\bigskip
\section*{Declarations}
\noindent {\bf  Funding:}  D. \v{S}ev\v{c}ovi\v{c} was supported by the VEGA 1-0493-24 research project.
M. Kol\'a\v{r} was supported by Czech Science Foundation Project no. 25-18265S - Computational models of hydraulic fracturing in geothermal energy production.

\noindent {\bf Conflict of interest:} The authors declare that there are no conflicts of interest with respect to the publication of this paper

\noindent {\bf Authors' contributions:} The authors contributed equally to the study.

\noindent{\bf Ethical approval:} Not applicable.

\noindent{\bf Data availability:} All data in this paper are available from the author on a reasonable request.

\bibliographystyle{siam}

\bibliography{paper}

@article{Angenent1990b,
  title={Parabolic equations for curves on surfaces Part I. Curves with p-integrable curvature},
  author={Angenent, Sigurd},
  journal={Annals of Mathematics},
  pages={451--483},
  year={1990},
  publisher={JSTOR}
}

@article{Angenent1990,
  title={Nonlinear analytic semiflows},
  author={Angenent, Sigurd B},
  journal={Proceedings of the Royal Society of Edinburgh Section A: Mathematics},
  volume={115},
  number={1-2},
  pages={91--107},
  year={1990},
  publisher={Royal Society of Edinburgh Scotland Foundation}
}

@article{BKS2020,
  title={Curvature driven flow of a family of interacting curves with applications},
  author={Bene{\v{s}}, Michal and Kol{\'a}{\v{r}}, Miroslav and {\v{S}}ev{\v{c}}ovi{\v{c}}, Daniel},
  journal={Mathematical Methods in the Applied Sciences},
  volume={43},
  number={7},
  pages={4177--4190},
  year={2020},
  publisher={Wiley Online Library}
}

@article{daprato,
  title={Equations d’{\'e}volution abstraites non lin{\'e}aires de type parabolique},
  author={Grisvard, G Da Prato-P and Da Prato, G},
  journal={Ann. Mat. Pura Appl.,(4)},
  volume={120},
  pages={329--396},
  year={1979}
}

@article{Deckelnick2025,
  title={Discrete anisotropic curve shortening flow in higher codimension},
  author={Deckelnick, Klaus and N{\"u}rnberg, Robert},
  journal={IMA Journal of Numerical Analysis},
  volume={45},
  number={1},
  pages={36--67},
  year={2025},
  publisher={Oxford University Press}
}

@article{Helmholtz1858,
 author = {Helmholtz, H.},
 title = {{\"U}ber {Integrale} der hydrodynamischen {Gleichungen}, welche den {Wirbelbewegungen} entsprechen.},
 fjournal = {Journal f{\"u}r die Reine und Angewandte Mathematik},
 journal = {J. Reine Angew. Math.},
 issn = {0075-4102},
 volume = {55},
 pages = {25--55},
 year = {1858},
 language = {German},
 doi = {10.1515/crll.1858.55.25},
 url = {https://eudml.org/doc/147720},
 zbMATH = {2750339},
 ERAM = {055.1448cj}
}

@article{0965-0393-24-3-035003,
  title={Dynamics of dislocations described as evolving curves interacting with obstacles},
  author={Pau{\v{s}}, Petr and Bene{\v{s}}, Michal and Kol{\'a}{\v{r}}, Miroslav and Kratochv{\'\i}l, Jan},
  journal={Modelling and Simulation in Materials Science and Engineering},
  volume={24},
  number={3},
  pages={035003},
  year={2016},
  publisher={IOP Publishing}
}

@article{kolar2022,
  title={Qualitative and numerical aspects of a motion of a family of interacting curves in space},
  author={Bene{\v{s}}, Michal and Kol{\'a}{\v{r}}, Miroslav and {\v{S}}ev{\v{c}}ovi{\v{c}}, Daniel},
  journal={SIAM journal on applied mathematics},
  volume={82},
  number={2},
  pages={549--575},
  year={2022},
  publisher={SIAM}
}

@article{benes2024diffusion,
  title={On diffusion and transport acting on parameterized moving closed curves in space},
  author={Bene{\v{s}}, Michal and Kol{\'a}{\v{r}}, Miroslav and {\v{S}}ev{\v{c}}ovi{\v{c}}, Daniel},
  journal={Mathematical Methods in the Applied Sciences},
  volume={},
  number={},
  pages={1--15},
  year={2025},
 doi = {10.13001/ela.2024.7951},
}

@article{Binz,
  title={A convergent finite element algorithm for mean curvature flow in arbitrary codimension},
  author={Binz, Tim and Kov{\'a}cs, Bal{\'a}zs},
  journal={Interfaces and Free Boundaries},
  volume={25},
  number={3},
  pages={373--400},
  year={2023}
}

@article{MS3,
  title={Evolution of curves on a surface driven by the geodesic curvature and external force},
  author={Mikula, Karol and {\v{S}}ev{\v{c}}ovi{\v{c}}, Daniel},
  journal={Applicable Analysis},
  volume={85},
  number={4},
  pages={345--362},
  year={2006},
  publisher={Taylor \& Francis}
}

@article{Kolar_algoritmy,
	author = {Miroslav Kolář and Daniel Ševčovič},
	title = { Evolution of multiple closed knotted curves in space},
	journal = {Proceedings of the Conference Algoritmy},


	year = {2024},
	
	pages = {129--138},	
}

@article{PBKK:21,
  title={Improving method for deterministic treatment of double cross-slip in FCC metals under low homologous temperatures},
  author={Kol{\'a}{\v{r}}, Miroslav and Pau{\v{s}}, Petr and Kratochv{\'\i}l, Jan and Bene{\v{s}}, Michal},
  journal={Computational Materials Science},
  volume={189},
  pages={110251},
  year={2021},
  publisher={Elsevier}
}

@article{Reneker,
  title={Electrospinning jets and polymer nanofibers},
  author={Reneker, Darrell H and Yarin, Alexander L},
  journal={Polymer},
  volume={49},
  number={10},
  pages={2387--2425},
  year={2008},
  publisher={Elsevier}
}

@article{Yarin,
  title={Fundamentals and applications of micro-and nanofibers},
  author={Gilmore, Charles M},
  journal={MRS BULLETIN},
  volume={40},
  pages={87--88},
  year={2015}
}

@book{He,
  title={Electrospun nanofibres and their applications},
  author={He, Ji-Huan and Liu, Yong and Mo, Lu-Feng and Wan, Yu-Qin and Xu, Lan},
  year={2008},
  publisher={ISmithers Shawbury, UK}
}

@article{Xu,
  title={Numerical simulation of a two-phase flow in the electrospinning process},
  author={Xu, Lan and Liu, HongYing and Si, Na and Wai Ming Lee, Eric},
  journal={International Journal of Numerical Methods for Heat \& Fluid Flow},
  volume={24},
  number={8},
  pages={1755--1761},
  year={2014},
  publisher={Emerald Group Publishing Limited}
}

@article{sevcovic2001evolution,
  title={Evolution of plane curves driven by a nonlinear function of curvature and anisotropy},
  author={Mikula, Karol and {\v{S}}ev{\v{c}}ovi{\v{c}}, Daniel},
  journal={SIAM Journal on Applied Mathematics},
  volume={61},
  number={5},
  pages={1473--1501},
  year={2001},
  publisher={SIAM}
}

@article{MS2004,
  title={Computational and qualitative aspects of evolution of curves driven by curvature and external force},
  author={Mikula, Karol and {\v{S}}ev{\v{c}}ovi{\v{c}}, Daniel},
  journal={Computing and Visualization in Science},
  volume={6},
  number={4},
  pages={211--225},
  year={2004},
  publisher={Springer}
}

@article{MMAS2004,
  title={A direct method for solving an anisotropic mean curvature flow of plane curves with an external force},
  author={Mikula, Karol and {\v{S}}ev{\v{c}}ovi{\v{c}}, Daniel},
  journal={Mathematical Methods in the Applied Sciences},
  volume={27},
  number={13},
  pages={1545--1565},
  year={2004},
  publisher={Wiley Online Library}
}

@article{Hou,
  title={Removing the stiffness from interfacial flows with surface tension},
  author={Hou, Thomas Y and Lowengrub, John S and Shelley, Michael J},
  journal={Journal of Computational Physics},
  volume={114},
  number={2},
  pages={312--338},
  year={1994},
  publisher={Elsevier}
}

@article{Kimura,
  title={Numerical analysis of moving boundary problems using the boundary tracking method},
  author={Kimura, Masato},
  journal={Japan journal of industrial and applied mathematics},
  volume={14},
  pages={373--398},
  year={1997},
  publisher={Springer}
}

@article{BKS2017,
  title={Area preserving geodesic curvature driven flow of closed curves on a surface},
  author={Bene{\v{s}}, Michal and Kol{\'a}{\v{r}}, Miroslav and {\v{S}}ev{\v{c}}ovi{\v{c}}, Daniel},
  journal={Discrete Contin. Dyn. Syst. Ser. B},
  volume={22},
  number={10},
  pages={3671--3689},
  year={2017}
}

@article{Barret2010,
  title={Numerical approximation of gradient flows for closed curves in {R}${}^d$},
  author={Barrett, John W and Garcke, Harald and N{\"u}rnberg, Robert},
  journal={IMA journal of numerical analysis},
  volume={30},
  number={1},
  pages={4--60},
  year={2010},
  publisher={OUP}
}

@article{Barret2012,
  title={Parametric approximation of isotropic and anisotropic elastic flow for closed and open curves},
  author={Barrett, John W and Garcke, Harald and N{\"u}rnberg, Robert},
  journal={Numerische Mathematik},
  volume={120},
  number={3},
  pages={489--542},
  year={2012},
  publisher={Springer}
}

@article{Elliot2017,
  title={On approximations of the curve shortening flow and of the mean curvature flow based on the DeTurck trick},
  author={M. Elliott, Charles and Fritz, Hans},
  journal={IMA Journal of Numerical Analysis},
  volume={37},
  number={2},
  pages={543--603},
  year={2017},
  publisher={Oxford University Press}
}

@article{Garcke2009,
  title={Nonlinear stability of stationary solutions for curvature flow with triple junction},
  author={Garcke, Harald and Kohsaka, Yoshihito and  {\v{S}}ev{\v{c}}ovi{\v{c}}, Daniel},
  journal={Hokkaido Mathematical Journal},
  volume={38},
  number={4},
  pages={721--769},
  year={2009},
  publisher={Hokkaido University, Department of Mathematics}
}

@article{MS2014,
  title={Manifold evolution with tangential redistribution of points},
  author={Mikula, Karol and Reme\v{s}\'{\i}kov\'a, Mariana and Sarkoci, Peter and {\v{S}}ev{\v{c}}ovi{\v{c}}, Daniel},
  journal={SIAM Journal on Scientific Computing},
  volume={36},
  number={4},
  pages={A1384--A1414},
  year={2014},
  publisher={SIAM}
}

@article{Lunardi1984,
  title={Abstract quasilinear parabolic equations},
  author={Lunardi, Alessandra},
  journal={Mathematische Annalen},
  volume={267},
  number={3},
  pages={395--415},
  year={1984},
  publisher={Springer}
}

@article{kolar2017area,
  title={Area preserving geodesic curvature driven flow of closed curves on a surface},
  author={Kol{\'a}r, Miroslav and Bene{\v{s}}, Michal and {\v{S}}evcovic, Daniel},
  journal={Discrete Contin. Dyn. Syst. Ser. B},
  volume={22},
  number={10},
  pages={3671--3689},
  year={2017}
}

@article{vsevvcovivc2011evolution,
  title={Evolution of plane curves with a curvature adjusted tangential velocity},
  author={{\v{S}}ev{\v{c}}ovi{\v{c}}, Daniel and Yazaki, Shigetoshi},
  journal={Japan journal of industrial and applied mathematics},
  volume={28},
  pages={413--442},
  year={2011},
  publisher={Springer}
}

@article{niu2019dislocation,
  title={Dislocation dynamics formulation for self-climb of dislocation loops by vacancy pipe diffusion},
  author={Niu, Xiaohua and Gu, Yejun and Xiang, Yang},
  journal={International Journal of Plasticity},
  volume={120},
  pages={262--277},
  year={2019},
  publisher={Elsevier}
}

@article{niu2017dislocation,
  title={Dislocation climb models from atomistic scheme to dislocation dynamics},
  author={Niu, Xiaohua and Luo, Tao and Lu, Jianfeng and Xiang, Yang},
  journal={Journal of the Mechanics and Physics of Solids},
  volume={99},
  pages={242--258},
  year={2017},
  publisher={Elsevier}
}

@article{dziuk1994convergence,
  title={Convergence of a semi-discrete scheme for the curve shortening flow},
  author={Dziuk, Gerhard},
  journal={Mathematical Models and Methods in Applied Sciences},
  volume={4},
  number={04},
  pages={589--606},
  year={1994},
  publisher={World Scientific}
}

@article{deckelnick1997weak,
  title={Weak solutions of the curve shortening flow},
  author={Deckelnick, Klaus},
  journal={Calculus of Variations and Partial Differential Equations},
  volume={5},
  number={6},
  pages={489--510},
  year={1997},
  publisher={Springer}
}

@article{gage1986heat,
  title={The heat equation shrinking convex plane curves},
  author={Gage, Michael and Hamilton, Richard S},
  journal={Journal of Differential Geometry},
  volume={23},
  number={1},
  pages={69--96},
  year={1986},
  publisher={Lehigh University}
}

@article{kemmochi2024structure,
  title={Structure-preserving numerical methods for constrained gradient flows of planar closed curves with explicit tangential velocities},
  author={Kemmochi, Tomoya and Miyatake, Yuto and Sakakibara, Koya},
  journal={Japan Journal of Industrial and Applied Mathematics},
  pages={1--29},
  year={2024},
  publisher={Springer}
}

@article{ASENS_1990_4_23_2_229_0,
     author = {Gage, Michael E.},
     title = {Curve shortening on surfaces},
     journal = {Annales scientifiques de l'\'Ecole Normale Sup\'erieure},
     pages = {229--256},
     publisher = {Elsevier},
     volume = {Ser. 4, 23},
     number = {2},
     year = {1990},
     doi = {10.24033/asens.1603},
     mrnumber = {91a:53072},
     zbl = {0713.53022},
     language = {en},
     url = {https://www.numdam.org/articles/10.24033/asens.1603/}
}

\end{document}